\DeclareFontFamily{OT1}{pzc}{}
\DeclareFontShape{OT1}{pzc}{m}{it}%
             {<-> s * [1,150] pzcmi7t}{}
\DeclareMathAlphabet{\mathpzc}{OT1}{pzc}%
                                 {m}{it}
\theoremstyle{plain} 
\newtheorem{thm}{Theorem}[section]
\newtheorem{prop}[thm]{Proposition}
\newtheorem{lemma}[thm]{Lemma}
\newtheorem{cor}[thm]{Corollary}
\theoremstyle{remark}
\newtheorem{rem}[thm]{Remark}
\newtheorem{sch}[thm]{Scholium}
\theoremstyle{definition}
\newtheorem{paragr}[thm]{}
\theoremstyle{plain} 
\numberwithin{equation}{thm}
\renewcommand{\mathcal}{\mathpzc}
\renewcommand{\mathbb}{\mathbbm}
\renewcommand{\leq}{\leqslant}
\newcommand{\pref}[1]{[\op{#1},\mathit{sSet}]}
\newcommand{\To}{\longrightarrow}
\newcommand{\cat}{\mathpzc{Cat}}
\newcommand{\Hom}{\operatorname{\mathrm{Hom}}}
\newcommand{\sHom}{\operatorname{\mathpzc{Hom}}}
\newcommand{\op}[1]{{#1}^{\mathit{op}}}
\newcommand{\ho}{\operatorname{\mathbf{Ho}}}
\newcommand{\V}{\mathcal{V}}
\newcommand{\W}{\mathcal{W}}
\newcommand{\derL}{\mathbf{L}}
\def\TO#1{\mathrel{\hbox to #1pt{\rightarrowfill}}}
\def\OT#1{\mathrel{\hbox to #1pt{\leftarrowfill}}}
\def\limind{\mathop{\oalign{\rm lim\cr
\hidewidth$\longrightarrow$\hidewidth\cr}}}%
\renewcommand{\To}{{\hskip -2.5pt\xymatrixcolsep{1.3pc}\xymatrix{\ar[r]&}\hskip -2.5pt}}
\renewcommand{\longmapsto}{{\hskip -2.5pt\xymatrixcolsep{1.3pc}\xymatrix{\ar@{|->}[r]&}\hskip -2.5pt}}
\renewcommand{\mapsto}{{\hskip -2.5pt\xymatrixcolsep{.9pc}\xymatrix{\ar@{|->}[r]&}\hskip -2.5pt}}
\renewcommand{\to}{{\hskip -2.5pt\xymatrixcolsep{1pc}\xymatrix{\ar[r]&}\hskip -2.5pt}}
\newcommand{\todouble}{\xymatrixcolsep{1pc}\xymatrix{\ar@<.5ex>[r]\ar@<-.5ex>[r]&}}
\newcommand{\todoubleop}{\xymatrixcolsep{1pc}\xymatrix{\ar@<.5ex>[r]&\ar@<.5ex>[l]}}
\renewcommand{\hookrightarrow}{{\hskip -1.5pt\raise 1.5pt\vbox{\xymatrixcolsep{.9pc}\xymatrix{\ar@{^{(}->}[r]&}}\hskip -3.5pt}}
\newcommand{\intcoin}[5]{\raise 12pt\vbox{\xymatrixcolsep{.9pc}\xymatrixrowsep{.7pc}\xymatrix{%
\scriptstyle #1\ar[r]^{\scriptscriptstyle #5}\ar[d]_{\scriptscriptstyle #4}&\scriptstyle #3\\\scriptstyle #2}}}
\newcommand{\lc}{\mathbf{LC}}
\newcommand{\lcc}{\mathbb{LC}}
\newcommand{\Ho}{\mathbb{Ho}}
\newcommand{\der}{\mathbb{D}}
\newcommand{\derR}{\mathbf{R}}
\newcommand{\derhom}{\mathbb{Hom}}
\title{Locally constant functors}
\date{March 2008}
\author{Denis-Charles Cisinski}
\address{LAGA\\
CNRS~(UMR 7539)\\
Institut~Galil\'ee\\
Universit\'e Paris~13\\
Avenue \hbox{Jean-Baptiste} \hbox{Cl\'ement}\\
93430~Villetaneuse\\France}
\email{cisinski@math.univ-paris13.fr}
\urladdr{http://www.math.univ-paris13.fr/~cisinski/}
\dedicatory{To Michael Batanin, for his nice questions}
\begin{document}
\begin{abstract}
We study locally constant coefficients.
We first study the theory of homotopy Kan
extensions with locally constant coefficients in
model categories, and explain how it characterizes the
homotopy theory of small categories.
We explain how to interpret this in terms of
left Bousfield localization of categories of diagrams
with values in a combinatorial model category.
At last, we explain how the theory of homotopy
Kan extensions in derivators can be used to
understand locally constant functors.
\end{abstract}
\maketitle
\tableofcontents

\section{Homology with locally constant coefficients}
\begin{paragr}
Given a model category\footnote{We mean a Quillen
model category. However, we could
take any kind of model category giving rise
to a good theory of homotopy colimits (i.e. to
a Grothendieck derivator); see the work
of Andrei R\u{a}dulescu-Banu~\cite{RadBan}
for more general examples.} $\V$ with small colimits,
and a small category $A$,
we will write $[A,\V]$ for the category of functors from $A$ to $\V$.
Weak equivalences in $[A,\V]$
are the termwise weak equivalences. We denote
by $\ho([A,\V])$ the localization of $[A,\V]$
by the class of weak equivalences.
\end{paragr}

\begin{paragr}
We denote by $\lc(A,\V)$ the full subcategory of
the category $\ho([A,\V])$ whose objects are the
locally constant functors, i.e. the functors
$$F:A\To\V$$
such that for any map $a\To a'$ in $A$, the map
$$F_a\To F_{a'}$$
is a weak equivalence in $\V$, or equivalently, an isomorphism
in $\ho(\V)$ (where $F_a$ is
the evaluation of $F$ at the object $a$).

Note that for any functor $u:A\To B$, the inverse image
functor
\begin{equation}\label{definvim1}
u^* : [B,\V]\To [A,\V]
\end{equation}
preserves weak equivalences, so that it induces a functor
\begin{equation}\label{definvim2}
u^*:\ho([B,\V])\To\ho([A,\V])
\end{equation}
The functor $u^*$ obviously preserves locally
constant functors, so that
it induces a functor
\begin{equation}\label{definvim3}
u^* : \lc(B,\V)\To\lc(A,\V)\ .
\end{equation}
\end{paragr}

\begin{thm}\label{weakequivloccstcoh}
Let $u:A\To B$ be a functor whose nerve is a simplicial weak equivalence.
Then the functor \eqref{definvim3} is an equivalence of categories.
\end{thm}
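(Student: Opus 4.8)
The plan is to package $\lc(-,\V)$ as the homotopy category of a model structure and to use homotopy Kan extensions. Recall that $\ho([C,\V])$ is the homotopy category of a Grothendieck derivator, so that every functor $v\colon C\To D$ induces adjunctions $\mathbf{L}v_!\dashv v^*$ and $v^*\dashv\mathbf{R}v_*$ whose values are homotopy colimits, resp.\ limits, over the comma categories $v/d$ and $d\backslash v$; in particular the homotopy colimits used below are available. Assume for the moment that $\V$ is combinatorial (the general case being handled by working with the sub-derivator of locally constant objects of $\ho([-,\V])$, as in Section~3). Form the left Bousfield localization $[C,\V]_{\mathbf{LC}}$ of the projective model structure at the set of maps forcing local constancy: its fibrant objects are exactly the projectively fibrant locally constant functors, and the inclusion identifies $\ho([C,\V]_{\mathbf{LC}})$ with the reflective full subcategory $\lc(C,\V)$ of $\ho([C,\V])$. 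Since $u^*$ preserves projective fibrations, weak equivalences and locally constant functors, the usual criterion for localized Quillen pairs shows that $u_!\dashv u^*$ is a Quillen adjunction $[A,\V]_{\mathbf{LC}}\rightleftarrows[B,\V]_{\mathbf{LC}}$, and — as $u^*$ preserves all weak equivalences and all locally constant functors — the functor \eqref{definvim3} is the right adjoint in the induced adjunction $\mathbf{L}u_!\dashv u^*$ on homotopy categories. So the theorem is equivalent to the assertion that this Quillen pair is a \emph{Quillen equivalence}.

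By the standard recognition principle, it is enough to check two statements. First, $u^*$ reflects local weak equivalences between locally constant fibrant objects: if $\varphi\colon F\To F'$ is a morphism of locally constant functors on $B$ with $\varphi_{u(a)}$ a weak equivalence for all $a\in A$, then, $F$ and $F'$ being locally constant, the set of objects $b$ of $B$ at which $\varphi_b$ is a weak equivalence is a union of connected components of $B$; since $Nu$ is a weak equivalence, the induced map on sets of connected components is onto, whence this set is all of $B$. This part uses only that $Nu$ is bijective on $\pi_0$. Second — and this is the real content — one must show that for every locally constant functor $G$ on $A$ the canonical comparison $G\To u^*\mathbf{L}u_!G$, computed in the localized homotopy categories, is an isomorphism (equivalently, that the derived unit of $\mathbf{L}u_!\dashv u^*$ at every projectively cofibrant object is a local weak equivalence).

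The crux is this second statement, and I expect it to be the main obstacle. Evaluated at an object $a$ of $A$, it compares $G_a$ with the value at $u(a)$ of the locally constant replacement of $\mathbf{L}u_!G$, whose value at an object $b$ of $B$ is the homotopy colimit over the comma category $u/b$ of the diagram $(a,\,ua\To b)\mapsto G_a$. As $G$ is locally constant, this diagram is levelwise weakly equivalent to the constant diagram with value $G_b$, so its homotopy colimit depends only on the homotopy type of $N(u/b)$; if $u$ were cofinal — i.e.\ each $N(u/b)$ weakly contractible — one could conclude immediately, but a weak equivalence need not be cofinal, and what rescues the argument is that the comparison is tested \emph{after} locally constant replacement on all of $B$, after which the relevant homotopy colimit decomposition of $G$ is governed by the map $NA\To NB$ and the hypothesis that $Nu$ is a weak equivalence can be brought to bear. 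To make this precise I would first dispose of the case $B=e$: if $NC$ is weakly contractible, then the constant diagram functor $\ho(\V)\To\lc(C,\V)$ is an equivalence — equivalently, every locally constant functor on a weakly contractible category is constant up to weak equivalence, with no extra morphisms between such in the homotopy category — which comes from a direct homotopy colimit computation resting on the good behaviour of homotopy colimits of locally constant diagrams (Quillen's Theorem~A). One then bootstraps to an arbitrary weak equivalence $u$ by presenting $A$ and $B$, compatibly, as homotopy colimits of weakly contractible categories and using that $C\mapsto\lc(C,\V)$ turns homotopy colimits of small categories into homotopy limits of homotopy categories. The delicate points — where the argument actually has to work — are the commutation of homotopy colimits and homotopy Kan extensions with the locally constant replacement functor, and this descent property for $\lc(-,\V)$; everything else is the standard formalism of homotopy Kan extensions and Bousfield localization.
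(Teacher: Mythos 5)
Your reduction to a Quillen-equivalence criterion for the local model structures is coherent and your first check (that $u^*$ reflects local weak equivalences between locally constant fibrant objects, using only that $Nu$ is $\pi_0$-bijective) is correct. But both places you flag as ``the real content'' are exactly where the argument breaks down, and neither is a routine matter.

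First, the base case $B=e$. You claim that for weakly contractible $C$ the constant-diagram functor $\ho(\V)\To\lc(C,\V)$ is an equivalence, and that this ``comes from a direct homotopy colimit computation resting on Quillen's Theorem~A.'' Full faithfulness is indeed a short Bousfield--Kan computation ($\derL\varinjlim_C$ of a constant diagram is tensoring with $NC$), but essential surjectivity is \emph{not}: to show that a locally constant $F$ is equivalent to the constant diagram with value $\derL\varinjlim_C F$, you must show that each comparison $F_c \To \derL\varinjlim_C F$ is an isomorphism in $\ho(\V)$, and this is precisely the statement of Corollary~\ref{loctriv} for an arbitrary aspherical $C$, which in the paper's logical order is itself a \emph{consequence} of the theorem you are trying to prove. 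Quillen's Theorem~A concerns when $Nu$ is a weak equivalence of simplicial sets; it gives no information about locally constant $\V$-valued coefficients, and invoking it here is circular. The paper's route escapes this circularity because it only needs the much easier case where $C$ has a terminal object (Proposition~\ref{termobjectasph}, which is a pure adjunction/2-functoriality argument), and lets the minimality theorem for weak basic localizers~(Theorem~\ref{minimal}, i.e.\ \cite[6.1.18]{Ci3}) do the heavy lifting of promoting that special case to all categories with weakly contractible nerve.

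Second, the bootstrap. You propose to present $A$ and $B$ compatibly as homotopy colimits of weakly contractible categories and use that $C\mapsto\lc(C,\V)$ sends homotopy colimits of small categories to homotopy limits of categories. This descent statement is a genuinely deep theorem — it is morally the universal property $\lcc(A,\der)\simeq\derhom_!(\Ho(L_A\pref{A}),\der)$ from Section~3, combined with the fact that $A\mapsto\Ho(L_A\pref{A})$ inverts the relevant colimits — and in the paper's development it is established only \emph{after} and \emph{using} Theorem~\ref{weakequivloccstcoh}. You give no independent argument, and I don't see one that doesn't smuggle in the theorem. Moreover, the compatible presentation of $A$ and $B$ as homotopy colimits of contractible pieces through which $u$ restricts is itself nontrivial to produce. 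Finally, your framing restricts $\V$ to combinatorial (and left proper) model categories so that the local model structures exist, whereas Section~1 of the paper proves the theorem for an arbitrary $\V$ with small colimits purely with homotopy Kan extensions; your reduction of the general case ``via Section~3'' is again circular since Section~3 builds on the theorem.

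In short: the axiomatic route through weak basic localizers (verify La trivially, Lb via Proposition~\ref{termobjectasph}, Lc via the formal Quillen~A argument of Proposition~\ref{main}, then cite Theorem~\ref{minimal}) is not a cosmetic choice — it is what lets one avoid ever having to prove directly that all weakly contractible categories behave like the point for locally constant coefficients, which is the step your proposal assumes rather than proves.
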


\begin{paragr}
The proof of Theorem \ref{weakequivloccstcoh} will need a little
preparation.

Define a functor between small categories $u:A\To B$
to be a \emph{weak equivalence} if the functor \eqref{definvim3}
is an equivalence of categories.

A small category $A$ is \emph{aspherical} if the map $A\To e$
is a weak equivalence (where $e$ denotes the terminal category).
A functor $u:A\To B$ will be said to be
\emph{aspherical} if for any object $b$ of $B$, the
functor $A/b\To B/b$ is a weak equivalence
(where $A/b=(u\downarrow b)$ is the comma category of
objects of $A$ over $b$, and $B/b=(1_B\downarrow b)$).

Theorem \ref{weakequivloccstcoh} asserts that any functor whose nerve is a simplicial
weak equivalence is a weak equivalence in the sense define above.
This will follow from the following result.
\end{paragr}

\begin{thm}\label{minimal}
Let $\W$ be a class of functors between small categories.
We assume that $\W$ is a weak basic localizer in the sense of
Grothendieck~\cite{Mal}, which means that the following properties are satified.
\begin{itemize}
\item[La] Any identity is in $\W$. The class $\W$ satisfies the two out of three
property. If a map $i:A\To B$ has a retraction $r:B\To A$ such that $ir$ is in $\W$,
then $i$ is in $\W$.
\item[Lb] If a small category $A$ has a terminal object,
then the map from $A$
to the terminal category is in $\W$.
\item[Lc] Given a functor $u:A\To B$, if for any object $b$ of $B$,
the induced functor $A/b\To B/b$ is in $\W$, then $u$ is in $\W$.
\end{itemize}
Then any functor between small categories whose nerve
is a simplicial weak equivalence is in $\W$.
\end{thm}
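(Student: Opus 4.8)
The plan is to extract from the three axioms enough homotopy theory to compare $\W$ with simplicial sets through the nerve, and then to conclude by two-out-of-three arguments. Write $\W_{\infty}$ for the class of functors whose nerve is a simplicial weak equivalence; we must show $\W_{\infty}\subseteq\W$. First I would record the standard formal consequences of (La)--(Lc). Any functor admitting a right adjoint lies in $\W$: if $v$ is right adjoint to $u:A\To B$, each comma category $A/b=(u\downarrow b)$ has a terminal object $(v(b),\varepsilon_{b})$, so $A/b\To e$ is in $\W$ by (Lb), and since $B/b$ also has a terminal object, $A/b\To B/b$ is in $\W$, whence $u\in\W$ by (Lc). Combining this with two-out-of-three, a functor admitting a left adjoint is also in $\W$; in particular equivalences of categories lie in $\W$, every category with an initial or a terminal object is aspherical, and the projection $A\times[1]\To A$ is in $\W$ (the section $a\mapsto(a,1)$ is right adjoint to it). It follows that $\W$ is invariant under natural transformations: two functors $A\To B$ joined by a natural transformation have the same status relative to $\W$, since such a transformation is a functor $A\times[1]\To B$ and the two sections $A\rightrightarrows A\times[1]$ of the projection lie in $\W$.

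Next I would bring in, for a small category $A$, its category of simplices $A^{\natural}=\Delta/N(A)$ (whose objects are the functors $[n]\To A$) and the last-vertex functor $\phi_{A}:A^{\natural}\To A$ sending a simplex $\sigma:[n]\To A$ to $\sigma(n)$; it is natural in $A$. The first claim is that $\phi_{A}\in\W$ for every $A$. By (Lc) it suffices that $(\phi_{A}\downarrow b)\To A/b$ lie in $\W$ for each object $b$, and a direct identification rewrites this functor as $\phi_{A/b}:(A/b)^{\natural}\To A/b$; since $A/b$ has a terminal object $t$, the endofunctor of $(A/b)^{\natural}$ that appends $t$ at the end of each simplex receives natural transformations both from the identity and from the constant functor at the $0$-simplex $t$, so the first paragraph together with (La) and (Lb) force $(A/b)^{\natural}\To e$, and hence $\phi_{A/b}$, into $\W$. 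Granting in addition that the functor $X\mapsto\Delta/X$ carries simplicial weak equivalences into $\W$, the theorem follows: if $N(u)$ is a weak equivalence, the naturality square of $\phi$ has vertical edges $\phi_{A},\phi_{B}$ in $\W$ and top edge $\Delta/N(u)$ in $\W$, so $u\in\W$ by two-out-of-three.

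Everything therefore reduces to showing that $\Delta/f:\Delta/X\To\Delta/Y$ lies in $\W$ whenever $f$ is a simplicial weak equivalence. A mapping-cylinder factorization, using that the trivial cofibrations of simplicial sets are exactly the anodyne extensions, reduces this to the case where $f$ is an anodyne extension; and since $\Delta/(-)$ preserves colimits, one reduces further to the horn inclusions $\Lambda^{n}_{k}\hookrightarrow\Delta^{n}$ --- equivalently, since $\Delta/\Delta^{n}$ has a terminal object, to the assertion that $\Delta/\Lambda^{n}_{k}$, and more generally $\Delta/X$ for any weakly contractible $X$, is $\W$-aspherical. This last point is the real obstacle, and it cannot be reached by the elementary manipulations above: $\Delta/\Lambda^{n}_{k}$ has neither an initial nor a terminal object, and the class $\W$ need not be stable under pushouts or transfinite compositions, so the colimit presentation of anodyne extensions does not transfer directly to $\W$. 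What one must do instead is bootstrap, from (La)--(Lc) alone, a usable homotopy calculus --- Grothendieck's smooth and proper base change theorems for $\W$, and the resulting homotopy invariance of pushouts along the appropriate (Dwyer-type) cofibrations --- and then recognize that the class of $\W$-aspherical simplicial sets, which contains the standard simplices, is closed under the operations out of which every weakly contractible simplicial set is built. I expect this step, which amounts to matching the homotopy theory attached to $\W$ with that of simplicial sets and is essentially the content of Grothendieck's conjecture that $\W_{\infty}$ is the smallest basic localizer, to be where all the real difficulty lies; once it is in place, the rest is formal. (Equivalently, one may phrase it as the statement that the $\W$-equivalences of simplicial sets form a localizer on $\widehat{\Delta}$ containing the minimal one, and settle it via the theory of Cisinski model structures on presheaf categories.)
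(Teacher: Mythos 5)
Your preliminary steps are sound and agree with the opening moves of any proof of this theorem: from (La)--(Lc) you correctly deduce that functors admitting an adjoint lie in $\W$, that $\W$ is invariant under natural transformations, that $(\phi_A\downarrow b)\cong(A/b)^\natural$, and hence (via the ``append the terminal object'' contraction of $(A/b)^\natural$) that the last-vertex functor $\phi_A:\Delta/N(A)\rightarrow A$ lies in $\W$. The naturality square for $\phi$ then correctly reduces the theorem to the single claim that $\Delta/f$ lies in $\W$ whenever $f$ is a simplicial weak equivalence.

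However, you leave exactly that claim unproven, and you are right that it is where the entire difficulty lies. Your proposed reduction to horn inclusions via mapping cylinders and the saturation of anodyne extensions does not go through, as you yourself observe, because nothing in (La)--(Lc) makes $\W$ stable under pushouts or transfinite compositions; and establishing that $\Delta/X$ is $\W$-aspherical for every weakly contractible $X$ requires one to build, from the three axioms alone, the calculus of smooth and proper functors, their base-change theorems, and the resulting homotopy invariance of the relevant gluings. That development occupies a substantial part of \cite{Ci3} and culminates precisely in \cite[Theorem~6.1.18]{Ci3}, which is the entirety of the proof the paper gives (it is a citation, not an argument). So the gap you flag is genuine and is not a minor omission: what you have written is a correct reduction to the hard case, together with an accurate description of what is missing, but not a proof of the statement.
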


\begin{proof}
See \cite[Theorem 6.1.18]{Ci3}.
\end{proof}

\begin{paragr}
To prove Theorem \ref{weakequivloccstcoh}, we will prove that
the class of weak equivalences satisfies the properties
listed in the previous theorem. Property La is easy to check.
It thus remains to prove properties Lb and Lc. In other
words, we have to prove that any category with a terminal
object is aspherical, and that any aspherical functor
is a weak equivalence.
We will use the theory of homotopy Kan extensions in $\V$.

Given a functor $u:A\To B$, the functor \eqref{definvim1}
has a left adjoint
\begin{equation}\label{definvim5}
u_! : [A,\V]\To [B,\V]
\end{equation}
which admits a total left derived functor
\begin{equation}\label{definvim6}
\derL u_! : \ho([A,\V])\To\ho([B,\V])
\end{equation}
The functor $\derL u_!$ is also a left adjoint
of the functor \eqref{definvim2}; see \cite{Ci1,RadBan}.
When $B$ is the terminal category, we will write \smash{$\derL\limind_A=\derL u_!$} for the
corresponding homotopy colimit functor.
\end{paragr}

\begin{paragr}
For each object $b$ of $B$, we have the following pullback square
of categories
\begin{equation}\label{definvim7}\begin{split}\xymatrix{
A/b\ar[d]_{u/b}\ar[r]^v&A\ar[d]^u\\
B/b\ar[r]_w&B
}\end{split}\end{equation}
(where $w$ is the obvious forgetful functor).
Given a functor $F$ from $A$ (resp. $B$) to $\V$, we will write
\begin{equation}\label{definvim8}
F/b=v^*(F)\quad\text{(resp.}\ F/b=w^*(F)\ ).
\end{equation}
This gives the following formula for a functor $F:B\To\V$
\begin{equation}\label{definvim9}
u^*(F)/b=(u/b)^*(F/b)\ .
\end{equation}
It is a fact that left homotopy Kan extensions
can be computed pointwise (like in ordinary category theory),
which can be formulated like this:
\end{paragr}

\begin{prop}\label{changebase}
For any functor $F:A\To\V$, and any object $b$ of $B$, the
base change map
$$\derL\limind_{A/b}F/b\To \derL u_!(F)_b$$
is an isomorphism in $\ho(\V)$.
\end{prop}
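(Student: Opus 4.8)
The plan is to reduce the statement to two facts about homotopy left Kan extensions in $\V$, both of which belong to the theory developed in \cite{Ci1} and \cite{RadBan} (indeed the proposition is essentially the base-change axiom for comma squares of the associated derivator, and the argument below is merely a way of obtaining it from the strict pullback square \eqref{definvim7}). The first fact is that for a small category $D$ with a terminal object $t$, the homotopy colimit functor $\derL\limind_{D}$ is canonically isomorphic to the evaluation at $t$: the projection $D\To e$ is left adjoint to the functor $e\To D$ selecting $t$, and, for a functor that is a left adjoint, the left Kan extension along it is given by restriction along the right adjoint, so $\limind_{D}$ is already isomorphic, before deriving, to the functor $G\mapsto G_{t}$; since the latter preserves weak equivalences, $\derL\limind_{D}G\simeq G_{t}$ in $\ho(\V)$. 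Applied to $D=B/b$, whose terminal object $(b,1_{b})$ is sent by $w$ to $b$, this gives a natural isomorphism $G_{b}\simeq\derL\limind_{B/b}\,w^{*}(G)$ for every object $G$ of $\ho([B,\V])$.

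The second fact is base change along the pullback square \eqref{definvim7}: the canonical comparison morphism
$$\derL(u/b)_{!}\,v^{*}(F)\To w^{*}\,\derL u_{!}(F)$$
is an isomorphism in $\ho([B/b,\V])$. Granting it, I would apply $\derL\limind_{B/b}$ to this isomorphism, combine with the composition law $\derL\limind_{B/b}\circ\derL(u/b)_{!}\simeq\derL\limind_{A/b}$ for homotopy left Kan extensions (which refines the evident equality $\limind_{B/b}\circ(u/b)_{!}=\limind_{A/b}$ of ordinary Kan extensions), and use the first fact with $G=\derL u_{!}(F)$, so as to get
$$(\derL u_{!}F)_{b}\;\simeq\;\derL\limind_{B/b}\,w^{*}\,\derL u_{!}(F)\;\simeq\;\derL\limind_{B/b}\,\derL(u/b)_{!}\,v^{*}(F)\;\simeq\;\derL\limind_{A/b}\,v^{*}(F)\;=\;\derL\limind_{A/b}\,F/b\ .$$
A routine diagram chase, using the commutativity \eqref{definvim9} of the square, identifies this composite isomorphism with the base change map of the statement.

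The genuine obstacle is the base-change isomorphism $\derL(u/b)_{!}\,v^{*}\simeq w^{*}\,\derL u_{!}$: homotopy base change fails for arbitrary pullback squares of small categories, so one really uses that $w:B/b\To B$ is a comma projection --- equivalently, a discrete fibration classified by a representable presheaf. If one wants to prove this point by hand rather than invoke \cite{Ci1,RadBan}, the usual route is to replace $F$ by its two-sided bar resolution: a projectively cofibrant diagram whose term in simplicial degree $n$ is the coproduct, over the chains $a_{0}\To\cdots\To a_{n}$ in $A$, of the free cofibrant diagrams generated at $a_{0}$ by a cofibrant replacement of $F_{a_{n}}$. Applying $u_{!}$ to this resolution degreewise turns each such coproduct into one indexed by nerves of the comma categories, and, after evaluation at $b$, one recognizes the result as the Bousfield--Kan model of $\derL\limind_{A/b}F/b$; the only real labour there is the Reedy/framing bookkeeping needed to make the bar construction cofibrant in a model category that is not assumed simplicial.
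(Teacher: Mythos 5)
Your proposal is correct in outline, but it is worth being precise about what it adds to the paper's argument, because the paper's ``proof'' is nothing but a citation to \cite{Ci1} and \cite[Theorem 9.6.5]{RadBan}, which is exactly where both of your two ``facts'' live. Fact 1 (homotopy colimit over a category with terminal object $t$ is evaluation at $t$) is established in the paper inline, in the proof of Proposition \ref{termobjectasph}, by the same $2$-functoriality/adjointness argument you give. Fact 2, the mate isomorphism $\derL(u/b)_!\,v^{*}\simeq w^{*}\,\derL u_!$ for the comma square \eqref{definvim7}, is not a simpler intermediate: quantified over all $u$ and $b$ it is \emph{equivalent} to Proposition \ref{changebase} (apply the proposition pointwise to $u/b$ at each $(b',\beta)$ of $B/b$, use $(A/b)/(b',\beta)\cong A/b'$, and invoke Proposition \ref{isolocal}). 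So the reduction ``Fact~1 $+$ Fact~2 $+$ composition of derived Kan extensions $\Rightarrow$ Proposition \ref{changebase}'' does not descend to anything more primitive; it merely re-expresses the pointwise formula in the form of the derivator axiom (Der4), which is fine, but then one still has to prove it, and you correctly identify the bar resolution as the standard route (this is essentially the content of the cited references). Two small remarks on that sketch. First, the convention is reversed: the degree-$n$ term of the bar resolution of $F$ should be $\coprod_{a_0\to\cdots\to a_n}(a_n)_!\,QF_{a_0}$, not $(a_0)_!\,QF_{a_n}$; with your convention the face maps do not land where they should, and after applying $u_!$ and evaluating at $b$ the indexing set becomes chains $a_0\to\cdots\to a_n$ decorated by $u(a_0)\to b$, which is not the nerve of $A/b$. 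With the corrected convention one gets $\coprod u(a_n)\to b$, i.e.\ precisely chains in the comma category $A/b=u\!\downarrow\! b$, so the identification with the Bousfield--Kan model of $\derL\limind_{A/b}F/b$ goes through. Second, the ``routine diagram chase'' identifying your composite with the canonical base change map is genuinely routine but nonempty; the base change map is built from the unit $F\to u^{*}\derL u_!(F)$ and the cone $w^{*}\derL u_!(F)\to\mathrm{const}_{B/b}\,\derL u_!(F)_b$ coming from the terminal object, and one must check that this matches your chain of identifications --- it does, by the triangle identities. In short: your argument is sound, it takes the same route as the cited sources (and so, indirectly, as the paper), and it usefully flags the two genuine technical points (base change must exploit that $w$ is a discrete fibration, and cofibrancy of the bar construction in a non-simplicial model category requires a framing or Reedy argument).
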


\begin{proof}
See \cite{Ci1} or, in a more general context, \cite[Theorem 9.6.5]{RadBan}.
\end{proof}

\begin{prop}\label{isolocal}
Let $I$ be a small category. A morphism $F\To G$
in $\ho([I,\V])$ is an isomorphism if and only for
any object $i$ of $I$, the map $F_i\To G_i$
is an isomorphism in $\ho(\V)$.
\end{prop}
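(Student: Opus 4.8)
The plan is to establish the two implications separately; the ``only if'' part is purely formal, and the real content lies in the converse.

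\emph{Necessity.} For each object $i$ of $I$ let $i\colon e\To I$ be the functor from the terminal category selecting $i$. Since $[e,\V]=\V$, the evaluation $F\mapsto F_i$ is precisely the inverse image functor $i^\ast\colon\ho([I,\V])\To\ho(\V)$ of \eqref{definvim2}. Being a functor, $i^\ast$ preserves isomorphisms, so if $F\To G$ is invertible in $\ho([I,\V])$ then each $F_i\To G_i$ is invertible in $\ho(\V)$.

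\emph{Sufficiency.} Here I would work with a model structure on $[I,\V]$ whose weak equivalences are exactly the termwise weak equivalences --- for instance the projective model structure, available under the running hypotheses on $\V$ recalled in the footnote above --- and use the standard fact that a morphism of a model category is a weak equivalence if and only if it becomes invertible in the homotopy category. Consider $\phi\colon F\To G$ in $\ho([I,\V])$ with each $\phi_i\colon F_i\To G_i$ invertible in $\ho(\V)$. Replace $F$ and $G$ by bifibrant objects $F'$ and $G'$; the connecting morphisms being (termwise) weak equivalences of $[I,\V]$, they induce isomorphisms $F_i\simeq F'_i$ and $G_i\simeq G'_i$ in $\ho(\V)$, compatibly with the morphism $\phi'\colon F'\To G'$ of $\ho([I,\V])$ corresponding to $\phi$; hence each $\phi'_i$ is still invertible in $\ho(\V)$, and $\phi$ is invertible exactly when $\phi'$ is. As $F'$ is cofibrant and $G'$ is fibrant, the canonical map $\Hom_{[I,\V]}(F',G')\To\Hom_{\ho([I,\V])}(F',G')$ is surjective, so $\phi'$ is the image of an actual morphism $f\colon F'\To G'$ of $[I,\V]$.

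To finish, observe that for each $i$ the morphism $f_i\colon F'_i\To G'_i$ has image $\phi'_i$ in $\ho(\V)$, which is invertible; by the fact recalled above $f_i$ is therefore a weak equivalence of $\V$. This holds for every $i$, so $f$ is a termwise weak equivalence, that is, a weak equivalence of $[I,\V]$; consequently $\phi'$, and hence $\phi$, is invertible in $\ho([I,\V])$. The one point demanding care is the appeal to a model structure on $[I,\V]$ with termwise weak equivalences; should one prefer not to assume $\V$ cofibrantly generated, one may instead note that $I\mapsto\ho([I,\V])$ is a Grothendieck derivator and that the statement is precisely its conservativity axiom for the family of evaluation functors $\ho([I,\V])\To\ho(\V)$.
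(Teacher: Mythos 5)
Your proof is correct, but it takes a genuinely different route from the paper's, which is simply a citation to \cite{Ci1} and to \cite[Theorem 9.7.1]{RadBan}; there the statement appears as the conservativity axiom (Der~2) verified for the (pre)derivator $I\mapsto\ho([I,\V])$. You instead argue directly: equip $[I,\V]$ with the projective model structure, replace $F$ and $G$ by bifibrant objects, lift the map of $\ho([I,\V])$ to an honest map $f$ of $[I,\V]$ (using surjectivity of $\Hom_{[I,\V]}(F',G')\To\Hom_{\ho([I,\V])}(F',G')$ when $F'$ is cofibrant and $G'$ fibrant), and then invoke the saturation of weak equivalences in $\V$ to conclude that each $f_i$ is a weak equivalence, hence that $f$ is a termwise weak equivalence and an isomorphism in $\ho([I,\V])$. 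This is a clean, self-contained argument, and all the steps go through. What it buys is explicitness; what it costs is generality: the projective model structure on $[I,\V]$ requires $\V$ to be cofibrantly generated, whereas Section~1 of the paper works with an arbitrary Quillen model category (see the footnote). You correctly flag this, and your fallback to the derivator conservativity axiom is not just a stylistic alternative --- it is exactly the content of the cited references and is what is actually needed to cover the paper's stated hypotheses. Under the hypotheses of Section~2 onward (combinatorial $\V$), your model-categorical argument applies verbatim.
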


\begin{proof}
See \cite{Ci1} or \cite[Theorem 9.7.1]{RadBan}.
\end{proof}

\begin{prop}\label{loccsthomology}
Let $u:A\To B$ be a weak equivalence of small categories.
Then, for any locally constant functor $F:B\To \V$, the
map
$$\derL\limind_A u^*(F)\To\derL\limind_B F$$
is an isomorphism in $\ho(\V)$.
\end{prop}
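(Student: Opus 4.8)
The plan is to show that the base change morphism is an isomorphism in $\ho(\V)$ by a Yoneda argument, reducing the whole statement to the full faithfulness of the functor \eqref{definvim3}. First I would pin down the morphism. Writing $p_A:A\To e$ and $p_B:B\To e$ for the canonical functors, we have $p_A=p_B\,u$, hence $\derL\limind_A=\derL\limind_B\circ\derL u_!$ (left derived left Kan extensions compose here, since all the relevant inverse image functors preserve weak equivalences), and the morphism $\derL\limind_A u^*(F)\To\derL\limind_B F$ of the statement is the image under $\derL\limind_B$ of the counit $\varepsilon_F:\derL u_! u^*(F)\To F$ of the adjunction $\derL u_!\dashv u^*$. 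The structural input I would then use is that for every small category $C$ the functor $\derL\limind_C=\derL (p_C)_!$ is left adjoint to the constant diagram functor $c_C:=p_C^*:\ho(\V)\To\ho([C,\V])$, together with the identity $c_A=u^*c_B$ (which holds because $p_A=p_B\,u$).

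Next, I would fix an object $X$ of $\ho(\V)$ and study the map $\Hom_{\ho(\V)}(\derL\limind_B F,X)\To\Hom_{\ho(\V)}(\derL\limind_A u^*(F),X)$ given by precomposition with the base change morphism. Transporting it along the three adjunctions $\derL\limind_B\dashv c_B$, $\derL\limind_A\dashv c_A$ and $\derL u_!\dashv u^*$ and simplifying via the triangle identities, one finds that it becomes the map $\Hom_{\ho([B,\V])}(F,c_BX)\To\Hom_{\ho([A,\V])}(u^*F,u^*c_BX)$ induced by the inverse image functor $u^*$. Now $F$ is locally constant by hypothesis, the constant functor $c_BX$ is obviously locally constant, and $u^*$ preserves locally constant functors; so, $\lc(B,\V)$ and $\lc(A,\V)$ being \emph{full} subcategories of $\ho([B,\V])$ and $\ho([A,\V])$, the two hom-sets above are precisely $\Hom_{\lc(B,\V)}(F,c_BX)$ and $\Hom_{\lc(A,\V)}(u^*F,u^*c_BX)$, and the map between them is the one induced by the functor \eqref{definvim3}. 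Since $u$ is a weak equivalence, \eqref{definvim3} is an equivalence of categories, hence fully faithful, so this map is bijective; as $X$ is arbitrary, the Yoneda lemma then gives that the base change morphism is an isomorphism in $\ho(\V)$.

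The only step that is not purely formal, and hence the one to be careful about, is the identification in the second paragraph: one must verify that precomposition with the base change morphism really does correspond to $u^*$ on hom-sets. This is the standard bookkeeping — using naturality of the adjunction isomorphisms to move $\varepsilon_F$ to the right, adjuncting $g\mapsto g\circ\varepsilon_F$ across $\derL u_!\dashv u^*$, and applying the triangle identity $u^*(\varepsilon_F)\circ\eta_{u^*F}=\mathrm{id}_{u^*F}$ to rewrite the result as $g\mapsto u^*(g)$ — and it is worth spelling out explicitly. Note that the argument uses only that \eqref{definvim3} is fully faithful, not that it is essentially surjective.
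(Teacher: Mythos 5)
Your proof is correct and follows essentially the same route as the paper's: identify $\Hom(\derL\limind_B F, X)$ and $\Hom(\derL\limind_A u^*F, X)$ with hom-sets into constant diagrams via the $\derL\limind\dashv c$ adjunction, note that constant diagrams are locally constant and $u^*c_B=c_A$, invoke full faithfulness of \eqref{definvim3}, and conclude by Yoneda. The only difference is that you spell out the naturality bookkeeping (that precomposition with the base-change map corresponds to $u^*$ on hom-sets), which the paper leaves implicit.
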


\begin{proof}
Given a small category $I$ and an object $X$, denote by $X_I$
the constant functor from $I$ to $\V$ with value $X$.
Let $F:B\To \V$ be a locally constant functor.
Using the fact that \eqref{definvim3} is fully faithful,
we see that
$$\Hom_{\ho([B,\V])}(F,X_B)\To\Hom_{\ho([A,\V])}(u^*(F),u^*(X_B))$$
is bijective. As $u^*(X_B)=X_A$, the identifications
$$\begin{aligned}
\Hom_{\ho([B,\V])}(F,X_B)&\simeq\Hom_{\ho(\V)}(\derL\limind_B F,X)\\
\Hom_{\ho([A,\V])}(u^*(F),X_A)&\simeq\Hom_{\ho(\V)}(\derL\limind_A u^*(F),X)
\end{aligned}$$
and the Yoneda Lemma applied to $\ho(\V)$ ends the proof.
\end{proof}

\begin{cor}\label{loctriv}
Let $I$ be an aspherical category, and $F:I\To\V$
be a locally constant functor. Then for any object $i$
of $I$, the map
$$F_i\To\derL\limind_I F$$
is an isomorphism in $\ho(\V)$.
\end{cor}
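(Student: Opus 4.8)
The plan is to deduce this as an immediate consequence of Proposition~\ref{loccsthomology} applied to the structure map $I \To e$, combined with the behaviour of locally constant functors on aspherical categories. First I would observe that since $I$ is aspherical, the functor $p:I\To e$ is by definition a weak equivalence of small categories, so Proposition~\ref{loccsthomology} applies: for the locally constant functor $F:I\To\V$ we get that $\derL\limind_I F\To\derL\limind_e p_!(F)$... but wait, more directly, I want to compare $F$ with a constant functor. Here is the cleaner route.

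Since $I$ is aspherical, $p:I\To e$ is a weak equivalence, hence \eqref{definvim3} for $p$, namely $p^*:\lc(e,\V)\To\lc(I,\V)$, is an equivalence of categories; in particular it is essentially surjective. Thus the locally constant functor $F$ is isomorphic in $\ho([I,\V])$ to $p^*(X)=X_I$ for some object $X$ of $\ho(\V)$ — and evaluating at the object $i$ (using Proposition~\ref{isolocal}) shows $X\simeq F_i$ in $\ho(\V)$. Now for a constant functor $X_I$ the canonical map $X\To\derL\limind_I X_I$ is an isomorphism in $\ho(\V)$: indeed, by Proposition~\ref{loccsthomology} applied to $p:I\To e$ with the (tautologically locally constant) constant functor $X_e$ on the terminal category, the map $\derL\limind_I p^*(X_e)\To\derL\limind_e X_e = X$ is an isomorphism, and $p^*(X_e)=X_I$. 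Composing the isomorphism $F_i\simeq X$ with the isomorphism $X\simeq\derL\limind_I X_I\simeq\derL\limind_I F$ (the last using that $F\simeq X_I$ in $\ho([I,\V])$ and that $\derL\limind_I$ is a functor) yields the claim, provided one checks that the resulting isomorphism is the canonical map $F_i\To\derL\limind_I F$.

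The main obstacle is precisely that last compatibility: one must verify that the isomorphism produced by chaining these equivalences coincides with the structural map $F_i\To\derL\limind_I F$ induced by the cone $F\Rightarrow (\derL\limind_I F)_I$ evaluated at $i$, rather than merely asserting abstract isomorphy. I would handle this by noting that the adjunction unit $F\To p^*p_!F$ (derived: $F\To p^*\derL\limind_I F$, i.e. $F\To(\derL\limind_I F)_I$) is, evaluated at $i$, exactly the map in question, and then tracing that under the identification $F\simeq X_I$ the derived unit for the constant functor is the canonical isomorphism $X\xrightarrow{\sim}\derL\limind_I X_I$ — which holds because $p_!$ applied to a constant functor $X_I$ computes, up to the equivalence, $X$ itself (the composite $e\To I\To e$ being the identity gives a splitting, and Proposition~\ref{loccsthomology} pins down the derived colimit). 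All the inputs are already available: Proposition~\ref{loccsthomology}, Proposition~\ref{isolocal}, and the definition of weak equivalence of small categories via \eqref{definvim3}.
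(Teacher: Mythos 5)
Your argument is correct, but it is noticeably more roundabout than the paper's and it creates for itself a compatibility problem (which you rightly flag as the main obstacle) that the paper's approach sidesteps entirely. The paper applies Proposition~\ref{loccsthomology} not to $p:I\To e$ but to the functor $i:e\To I$ selecting the object $i$: since $p$ and $p\circ i=\mathrm{id}_e$ are weak equivalences, the two-out-of-three part of property La forces $i$ to be a weak equivalence too, and then Proposition~\ref{loccsthomology} with $u=i$ says directly that $\derL\limind_e i^*(F)\To\derL\limind_I F$, i.e.\ $F_i\To\derL\limind_I F$, is the canonical map and is an isomorphism — nothing left to chase. Your route (essential surjectivity of $p^*$ to write $F\simeq X_I$, Proposition~\ref{loccsthomology} for $p$ applied to $X_e$, then a chain of isomorphisms) does work, and your sketch of the compatibility check via naturality of the unit $\eta_F\colon F\To p^*\derL\limind_I F$ and the triangle identity $p^*\varepsilon_X\circ\eta_{p^*X}=\mathrm{id}$ is exactly what is needed to close it, but you leave that verification as a plan rather than carrying it out. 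The lesson here is that choosing the weak equivalence $i:e\To I$ rather than $p:I\To e$ makes the base-change/counit map you need to control \emph{be} the map in the statement, whereas working with $p$ forces you to transport an isomorphism along a non-canonical identification $F\simeq X_I$ and then prove it agrees with the structural map.
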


\begin{proof}
Apply Proposition \ref{loccsthomology} to the functor from
the terminal category to $I$ defined by $i$.
\end{proof}

\begin{prop}\label{termobjectasph}
Any small category which has a terminal object is aspherical.
\end{prop}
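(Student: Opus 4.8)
The plan is to show directly that the functor~\eqref{definvim3} attached to the projection $p:A\To e$ from $A$ onto the terminal category $e$ is an equivalence of categories; by the definition given above, this is precisely the statement that $A$ is aspherical. First I would record the elementary reduction: every functor from $e$ to $\V$ is vacuously locally constant, so $\lc(e,\V)=\ho(\V)$, and the functor~\eqref{definvim3} attached to $p$ is nothing but the constant diagram functor $p^*:\ho(\V)\To\ho([A,\V])$ sending an object $X$ to the constant diagram $X_A$, which obviously lands in the full subcategory $\lc(A,\V)$. It therefore remains to prove that $p^*$ is fully faithful and that every locally constant functor lies in its essential image.

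The crux --- the step where one is most likely to get stuck unless one looks for the right adjunction --- is to exploit the terminal object $\omega$ of $A$ through an adjunction other than the one provided by the homotopy-colimit formalism emphasized above. Let $i:e\To A$ be the functor with value $\omega$, so that $p\circ i=1_e$. At the level of ordinary functor categories, the constant diagram functor $p^*:\V\To[A,\V]$ is right adjoint to the evaluation functor $i^*:[A,\V]\To\V$ at $\omega$: giving a morphism from an arbitrary diagram $G$ to a constant diagram $X_A$ amounts to giving a morphism $G_\omega\To X$, because $\omega$ is terminal (equivalently, $p^*$ is the right Kan extension functor along $i$). Since $i^*$ and $p^*$ both preserve termwise weak equivalences, this adjunction descends to an adjunction $i^*\dashv p^*$ between the localized categories $\ho([A,\V])$ and $\ho(\V)$. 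Now $i^*\circ p^*=1_\V$, and under this identification the counit of $i^*\dashv p^*$ is the identity; as a right adjoint whose counit is invertible is fully faithful, the functor $p^*:\ho(\V)\To\ho([A,\V])$ is fully faithful, and hence so is the induced functor $p^*:\ho(\V)\To\lc(A,\V)$.

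It remains to check essential surjectivity, and here I would simply apply Proposition~\ref{isolocal}. If $F:A\To\V$ is locally constant, the unit $F\To p^*i^*F=(F_\omega)_A$ of the adjunction $i^*\dashv p^*$ has, at each object $a$ of $A$, the component $F_a\To F_\omega$ induced by the unique morphism $a\To\omega$; these are isomorphisms in $\ho(\V)$ precisely because $F$ is locally constant, so by Proposition~\ref{isolocal} the unit is an isomorphism in $\ho([A,\V])$. Hence $F$ is isomorphic to $p^*(F_\omega)$ in $\lc(A,\V)$, and $p^*$ is essentially surjective. Combining the two parts, $p^*:\lc(e,\V)\To\lc(A,\V)$ is an equivalence of categories, \ie\ $A$ is aspherical. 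Besides Proposition~\ref{isolocal}, the only ingredient beyond formal category theory is the standard fact that an adjunction between functors each preserving weak equivalences induces an adjunction between the associated localizations; it is worth noting that the theory of homotopy Kan extensions is not needed for this particular property, in contrast with what will be required to verify property~Lc.
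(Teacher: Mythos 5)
Your proof is correct and takes essentially the same approach as the paper: both exploit the adjunction $i^*\dashv p^*$ between evaluation at the terminal object $\omega$ and the constant-diagram functor, pass it to the homotopy categories (the paper via $2$-functoriality of $A\mapsto\ho([A,\V])$, you via the standard descent of adjunctions between weak-equivalence-preserving functors, which is the same mechanism), and conclude essential surjectivity by applying Proposition~\ref{isolocal} to the unit $F\To p^*(F_\omega)$. Your closing observation that homotopy Kan extensions are not actually needed for property~Lb is accurate; the paper does spell out $\derL\limind_I F=F_\omega$, but this identity is just the descended adjunction in different clothing.
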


\begin{proof}
Let $I$ be a small category with a terminal object $\omega$.
This means that the functor $p$ from $I$ to the terminal category has a
right adjoint (which is has to be fully faithful).
But
$$A\longmapsto \ho([A,\V])$$
obviously defines a $2$-functor, which implies that the functor
$$p^*:\ho(\V)\To\ho([I,\V])$$
is fully faithful and that the evaluation functor at $\omega$
is a left adjoint to $p^*$. In particular, for any functor $F$
from $I$ to $\V$, we have
$$\derL\limind_I F=F_\omega\, .$$
If moreover $F$ is locally constant, then it follows from
Proposition \ref{isolocal} that the co-unit map
$$F\To p^*F_\omega=p^*\derL\limind_I F$$
is an isomorphism. This shows that $p^*$ is also
essentially surjective and ends the proof.
\end{proof}

\begin{cor}\label{aspheraspher}
A functor $u:A\To B$ is aspherical if and only if for any
object $b$ of $B$, the category $A/b$ is aspherical.
\end{cor}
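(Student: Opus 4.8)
The plan is to read this off directly from Proposition~\ref{termobjectasph} together with the two out of three property for weak equivalences of small categories. Fix an object $b$ of $B$ and consider the commutative triangle
$$A/b\To B/b\To e\ ,$$
in which the first functor is the map $u/b$ appearing in the pullback square \eqref{definvim7}, the second is the (unique) functor to the terminal category, and the composite is the (unique) functor $A/b\To e$. The assertion to prove is the equivalence, for each $b$, of ``$u/b$ is a weak equivalence'' with ``$A/b$ is aspherical'', and the triangle above is exactly what relates these two conditions.

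First I would note that $B/b=(1_B\downarrow b)$ has a terminal object, namely the pair $(b,1_b)$; hence Proposition~\ref{termobjectasph} applies and the functor $B/b\To e$ is a weak equivalence, i.e. $B/b$ is aspherical. Next I would observe that the class of weak equivalences of small categories defined above (those $u$ for which the induced functor $\lc(B,\V)\To\lc(A,\V)$ is an equivalence of categories) satisfies the two out of three property: since $A\mapsto\lc(A,\V)$, $u\mapsto u^*$ is a (contravariant) $2$-functor, a composable pair of weak equivalences has a weak equivalence as composite, and, quite generally, whenever two among three functors $F$, $G$, $G\circ F$ are equivalences of categories, so is the third. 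Applying this to the triangle above, in which the arrow $B/b\To e$ is already known to be a weak equivalence, we get that $u/b:A/b\To B/b$ is a weak equivalence if and only if $A/b\To e$ is, that is, if and only if $A/b$ is aspherical.

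Finally, letting $b$ range over all objects of $B$ and invoking the definition of an aspherical functor ($u$ is aspherical precisely when $u/b:A/b\To B/b$ is a weak equivalence for every object $b$ of $B$) yields the statement. I do not expect any genuine obstacle here: the only points to be careful about are that the weak equivalences really do enjoy two out of three, which is immediate from the $2$-functoriality of $u\mapsto u^*$, and that $B/b$ has a terminal object, which is built into the comma category construction and has already been exploited, via Proposition~\ref{termobjectasph}, in the proof of Proposition~\ref{termobjectasph} itself.
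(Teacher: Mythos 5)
Your proof is correct and follows essentially the same route as the paper: apply Proposition~\ref{termobjectasph} to $B/b$ (which has the terminal object $(b,1_b)$) and then conclude via the two out of three property applied to the triangle $A/b\To B/b\To e$. The only difference is that you spell out why two out of three holds (via $2$-functoriality of $u\mapsto u^*$), which the paper takes for granted when it checks property La.
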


\begin{proof}
As the class of weak equivalences satifies the two out of three
property, this follows from the fact that the category $B/b$
has a terminal object (namely $(b,1_b)$).
\end{proof}

\begin{paragr}
A functor $u:A\To B$ is \emph{locally constant} if for any
map $b\To b'$ in $B$, the functor $A/b\To A/b'$ is
a weak equivalence. For example, any aspherical functor
is locally constant.
\end{paragr}

\begin{prop}[Formal Serre spectral sequence]\label{serre}
If $u:A\To B$ is locally constant, then the functor \eqref{definvim6}
preserves locally constant functors. In particular, it induces a functor
$$\derL u_! : \lc(A,\V)\To\lc(B,\V)$$
which is a left adjoint to the functor \eqref{definvim3}.
\end{prop}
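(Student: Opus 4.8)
The plan is to establish the two assertions in turn: first that the functor $\derL u_!$ of \eqref{definvim6} takes locally constant functors to locally constant functors, and then that the resulting functor is left adjoint to \eqref{definvim3}. The first point is the real content; the second follows formally.

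So let $F:A\To\V$ be locally constant, and let $f:b\To b'$ be an arrow of $B$; we must show that the transition map $\derL u_!(F)_b\To\derL u_!(F)_{b'}$ is an isomorphism in $\ho(\V)$. Composition with $f$ defines a functor $A/f:A/b\To A/b'$, and since the projection $A/b'\To A$ composed with $A/f$ is the projection $A/b\To A$, we get $(A/f)^*(F/b')=F/b$ with the notation of \eqref{definvim8}. Now $A/f$ is a weak equivalence of small categories, because $u$ is locally constant, and $F/b'$ is a locally constant functor on $A/b'$, being the inverse image of the locally constant functor $F$ along the projection $A/b'\To A$ (which sends every arrow to an arrow of $A$). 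Applying Proposition \ref{loccsthomology} to the weak equivalence $A/f$ and to the locally constant functor $F/b'$, we find that the canonical map
$$\derL\limind_{A/b}F/b=\derL\limind_{A/b}(A/f)^*(F/b')\To\derL\limind_{A/b'}F/b'$$
is an isomorphism in $\ho(\V)$. It then remains to identify this map, through the base change isomorphisms of Proposition \ref{changebase} at $b$ and at $b'$, with the transition map $\derL u_!(F)_b\To\derL u_!(F)_{b'}$; this amounts to the naturality in $b$ of the isomorphism of Proposition \ref{changebase}, which reflects the functoriality in $b$ of the cartesian squares \eqref{definvim7} and of the associated base change morphisms (see \cite{Ci1,RadBan}). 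Hence $\derL u_!(F)$ is locally constant, and $\derL u_!$ restricts to a functor $\lc(A,\V)\To\lc(B,\V)$.

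It remains to treat the adjunction. The functor $\derL u_!$ of \eqref{definvim6} is left adjoint to $u^*$ of \eqref{definvim2}; moreover $u^*$ sends $\lc(B,\V)$ into $\lc(A,\V)$, yielding \eqref{definvim3}, and, by the previous paragraph, $\derL u_!$ sends $\lc(A,\V)$ into $\lc(B,\V)$. Because $\lc(A,\V)$ and $\lc(B,\V)$ are \emph{full} subcategories of $\ho([A,\V])$ and $\ho([B,\V])$, the adjunction bijections $\Hom_{\ho([B,\V])}(\derL u_!(X),Y)\simeq\Hom_{\ho([A,\V])}(X,u^*(Y))$ restrict, for locally constant $X$ and $Y$, to natural bijections $\Hom_{\lc(B,\V)}(\derL u_!(X),Y)\simeq\Hom_{\lc(A,\V)}(X,u^*(Y))$. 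This exhibits $\derL u_!:\lc(A,\V)\To\lc(B,\V)$ as a left adjoint to \eqref{definvim3}, as desired.

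The only delicate step is the compatibility invoked at the end of the second paragraph: one has to check that the isomorphism of Proposition \ref{changebase} may be chosen naturally with respect to transition morphisms in $B$, so that it genuinely intertwines the transition maps of the object $\derL u_!(F)$ of $\ho([B,\V])$ with the structure maps of the homotopy colimits over the comma categories $A/b$. Everything else is a direct application of Propositions \ref{changebase} and \ref{loccsthomology}, or a formal manipulation of adjunctions.
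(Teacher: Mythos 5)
Your proof is correct and follows essentially the same route as the paper's: pass to the induced functor $j_\beta: A/b \To A/b'$, apply Proposition \ref{loccsthomology} to this weak equivalence, and identify the result via the base change isomorphisms of Proposition \ref{changebase}. You add an explicit treatment of the formal adjunction claim and honestly flag the naturality of the base change isomorphism that the paper leaves implicit, both of which are sound.
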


\begin{proof}
Let $F$ be a locally constant functor, and $\beta : b\To b'$
be a map in $B$, We have to show that the induced map
$$\derL u_! (F)_b\To\derL u_!(F)_{b'}$$
is an isomorphism in $\ho(\V)$.
Denote by $j^{}_\beta : A/b\To A/b'$ the functor induced
by $\beta$ (which is a weak equivalence by assumption on $u$).
With the notations \eqref{definvim8}, we have
$$j^*_{\beta}(F/b')=F/b\, .$$
This corollary thus follows immediately from
Proposition \ref{changebase} and from Proposition \ref{loccsthomology}.
\end{proof}

\begin{prop}[Formal Quillen Theorem A]\label{main}
Any aspherical functor is a weak equivalence.
\end{prop}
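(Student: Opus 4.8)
The plan is to show that an aspherical functor $u:A\To B$ induces an equivalence of categories \eqref{definvim3}, $u^*:\lc(B,\V)\To\lc(A,\V)$, by exhibiting a quasi-inverse built from the homotopy left Kan extension. First I would invoke Corollary \ref{aspheraspher} to recast the hypothesis: $u$ is aspherical exactly when every comma category $A/b$ is aspherical. Since an aspherical functor is locally constant (as noted just before Proposition \ref{serre}), Proposition \ref{serre} applies and provides a functor $\derL u_! : \lc(A,\V)\To\lc(B,\V)$ which is left adjoint to $u^*$ between the subcategories of locally constant functors. So the whole problem reduces to checking that the unit and counit of this adjunction are isomorphisms.

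For the counit, I would argue as follows. Given a locally constant $F:B\To\V$, the counit $\derL u_!\, u^*(F)\To F$ is a morphism in $\ho([B,\V])$, so by Proposition \ref{isolocal} it suffices to check it is an isomorphism in $\ho(\V)$ after evaluation at each object $b$ of $B$. By Proposition \ref{changebase}, the evaluation $(\derL u_!\, u^*(F))_b$ is computed by $\derL\limind_{A/b} (u^*F)/b$, and by formula \eqref{definvim9} this equals $\derL\limind_{A/b} (u/b)^*(F/b)$. Now $A/b$ is aspherical and $(u/b)^*(F/b)$ is a locally constant functor on $A/b$ (being the inverse image of a locally constant functor under the functor $u/b:A/b\To B/b$). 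Hence Corollary \ref{loctriv} identifies this homotopy colimit with the value of $(u/b)^*(F/b)$ at the terminal object $(b,1_b)$ of $B/b$, which is $F_b$; and one checks that under this identification the counit becomes the identity. This shows the counit is an isomorphism, i.e. $\derL u_!$ is fully faithful on $\lc(A,\V)$, equivalently $u^*$ is essentially surjective.

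For the unit, let $G:A\To\V$ be locally constant; I must show $G\To u^*\derL u_!(G)$ is an isomorphism in $\ho([A,\V])$, i.e., after Proposition \ref{isolocal}, at each object $a$ of $A$. Writing $b=u(a)$, the value $u^*\derL u_!(G)$ at $a$ is $(\derL u_!(G))_b$, computed by Proposition \ref{changebase} as $\derL\limind_{A/b} G/b$. Since $A/b$ is aspherical and $G/b$ is locally constant on $A/b$, Corollary \ref{loctriv} again identifies this homotopy colimit with the value of $G/b$ at \emph{any} object of $A/b$; choosing the object $(a,1_b)\in A/b$ gives precisely $G_a$, and one verifies the unit realizes this identification. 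Thus the unit is an isomorphism, so $u^*$ is fully faithful. Combined with the previous paragraph, $u^*$ is an equivalence, i.e. $u$ is a weak equivalence.

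The main obstacle is not any single estimate but the bookkeeping: checking that the abstract unit and counit of the adjunction $(\derL u_!,u^*)$ restricted to locally constant functors really do coincide, under the identifications coming from Propositions \ref{changebase}, \ref{isolocal}, \ref{loccsthomology} and Corollary \ref{loctriv}, with the naive maps induced by the canonical objects $(b,1_b)$ and $(a,1_b)$ of the comma categories. This compatibility is where the $2$-functoriality of $A\mapsto\ho([A,\V])$ and the naturality of the base-change map of Proposition \ref{changebase} must be used carefully; everything else is a direct application of the preceding results.
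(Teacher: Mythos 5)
Your proof follows essentially the same strategy as the paper's: reduce to showing the unit and counit of the adjunction $(\derL u_!, u^*)$ are isomorphisms on locally constant functors, verify this objectwise using Proposition \ref{isolocal}, and compute via the base-change formula of Proposition \ref{changebase} together with the asphericity of the comma categories. You also make explicit the use of Proposition \ref{serre} to guarantee $\derL u_!$ carries $\lc(A,\V)$ into $\lc(B,\V)$, a point which the paper's proof leaves implicit but which is indeed needed for essential surjectivity of $u^*$; this is a helpful clarification rather than a deviation.

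There is, however, a small slip in your counit computation. After reducing to $\derL\limind_{A/b}(u/b)^*(F/b)$, you invoke Corollary \ref{loctriv} for the aspherical category $A/b$ but then claim the value is taken ``at the terminal object $(b,1_b)$ of $B/b$.'' That object does not live in $A/b$; Corollary \ref{loctriv} only lets you evaluate at an object $(a,\alpha\colon u(a)\to b)$ of $A/b$, which yields $F_{u(a)}$, not $F_b$ on the nose. One must then use the hypothesis that $F$ is locally constant to identify $F_{u(a)}\simeq F_b$ via the weak equivalence $F_\alpha$ (and implicitly, that $A/b\neq\varnothing$, which holds because an empty category cannot be aspherical when $\ho(\V)$ is nontrivial). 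The paper avoids this detour by applying Proposition \ref{loccsthomology} to the weak equivalence $u/b\colon A/b\to B/b$ to first transport the homotopy colimit to $\derL\limind_{B/b}F/b$, and only then evaluating at the genuine terminal object of $B/b$. Your argument is correct in spirit and fixable in one line, but as written the evaluation step is misplaced.
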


\begin{proof}
Let $u:A\To B$ be an aspherical functor.
Let $F: B\to \V$ be a locally constant functor.
We will prove that the co-unit map
$$\derL u_! u^*(F)\To F$$
is an isomorphism. According to Proposition \ref{isolocal},
it is sufficient to prove that for any object $b$ of $B$, the map
$$(\derL u_! u^*(F))_b\To F_b$$
is an isomorphism in $\ho(\V)$.
This follows immediately from the computations below.
$$\begin{aligned}
(\derL u_!u^*(F))_b&\simeq \derL\limind_{A/b} u^*(F)/b\quad\text{(Proposition \ref{changebase})}\\
&=\derL\limind_{A/b}(u/b)^*(F/b)\quad\text{(Formula \eqref{definvim9})}\\
&\simeq\derL\limind_{B/b} F/b\quad\text{(Proposition \ref{loccsthomology} applied to $u/b$)}\\
&\simeq F_b\quad\text{(because $(b,1_b)$ is a terminal object of $B/b$).}
\end{aligned}$$
Consider now a locally constant functor $F:A\To \V$. We will show that the
unit map
$$F\To u^*\derL u_!(F)$$
is an isomorphism. By virtue of Proposition \ref{isolocal},
we are reduced to prove that, for any object $a$ of $A$, the map
$$F_a\To (u^*\derL u_!(F))_a$$
is an isomorphism. We compute again
$$\begin{aligned}
\quad F_a&\simeq\derL\limind_{A/u(a)} F/u(a)
\quad\text{(Corollary \ref{loctriv} for $I=A/u(a)$ and $i=(a,1_{u(a)})$)}\\
&\simeq\derL u_!(F)_{u(a)}\quad\text{(Proposition \ref{changebase})}\\
&=(u^*\derL u_!(F))_a
\end{aligned}$$
and this ends the proof.
\end{proof}

\begin{proof}[Proof of Theorem \ref{weakequivloccstcoh}]
It is sufficient to check that the class of weak equivalences
satisfy the properties listed in Theorem \ref{minimal}.
The class of weak equivalences obviously satifies property La.
Property Lb follows from Proposition \ref{termobjectasph}, and property Lc
from Proposition \ref{main}.
\end{proof}

\begin{cor}\label{corcarweakequivcat}
Let $u:A\To B$ a functor between small categories. The nerve
of $u$ is a simplicial weak equivalence if and only if for
any model category $\V$, the functor
$$u^*:\lc(B,\V)\To\lc(A,\V)$$
is an equivalence of categories.
\end{cor}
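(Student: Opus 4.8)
The plan is to prove the two implications separately. One of them is already in hand: if $\nerf(u)$ is a simplicial weak equivalence, then Theorem~\ref{weakequivloccstcoh}, applied to the given model category $\V$, says precisely that $u^{*}\colon\lc(B,\V)\To\lc(A,\V)$ is an equivalence of categories; so this holds for every $\V$. The content is the converse, which I would prove using the single model category $\V=\mathit{sSet}$ of simplicial sets, and in fact using only that $u^{*}\colon\lc(B,\mathit{sSet})\To\lc(A,\mathit{sSet})$ is fully faithful.

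Here is the mechanism. For a simplicial set $L$ and a small category $I$, let $c_{L}\colon I\To\mathit{sSet}$ be the constant functor with value $L$; it is locally constant and $u^{*}(c_{L})=c_{L}$. Since $\lc(I,\mathit{sSet})$ is a \emph{full} subcategory of $\ho([I,\mathit{sSet}])$, its hom sets are computed in the latter. The object $c_{\ast}$ (value the terminal simplicial set) is terminal in $[I,\mathit{sSet}]$, and remains terminal in $\ho([I,\mathit{sSet}])$ because it is $p^{*}$ of the terminal simplicial set for the projection $p\colon I\To e$, which has a left adjoint $\derL p_{!}$. Hence $\Hom_{\ho([I,\mathit{sSet}])}(c_{\ast},c_{L})$ is the set of connected components of the homotopy limit of $c_{L}$ over $I$, and the homotopy limit of a constant diagram with value $L$ is the simplicial mapping space $\R\underline{\Hom}(\nerf(I),L)$. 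This yields, naturally in $L$ and compatibly with restriction along functors between index categories, a bijection
\begin{equation*}
\Hom_{\lc(I,\mathit{sSet})}(c_{\ast},c_{L})\;\simeq\;\Hom_{\ho(\mathit{sSet})}\bigl(\nerf(I),L\bigr)\ .
\end{equation*}
Applying this with $I=A$ and $I=B$, the bijection $\Hom_{\lc(B,\mathit{sSet})}(c_{\ast},c_{L})\To\Hom_{\lc(A,\mathit{sSet})}(c_{\ast},c_{L})$ coming from full faithfulness of $u^{*}$ becomes composition with the morphism $\nerf(u)\colon\nerf(A)\To\nerf(B)$ of $\ho(\mathit{sSet})$. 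Thus $\nerf(u)$ induces a bijection $\Hom_{\ho(\mathit{sSet})}(\nerf(B),L)\To\Hom_{\ho(\mathit{sSet})}(\nerf(A),L)$ for every simplicial set $L$, and the Yoneda lemma in the category $\ho(\mathit{sSet})$ forces $\nerf(u)$ to be an isomorphism there, i.e.\ a simplicial weak equivalence.

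I expect the formal parts to be painless: the reduction to full faithfulness, the identification of $\Hom_{\ho([I,\mathit{sSet}])}(c_{\ast},c_{L})$ with $\Hom_{\ho(\mathit{sSet})}(\nerf(I),L)$ (the usual computation of a homotopy limit of a constant diagram), and the concluding Yoneda argument. The one step to treat with care is the naturality claim above: one must check that the abstract hom-set bijection produced by the equivalence $u^{*}$ really is, after the identification, precomposition by $\nerf(u)$. This comes down to compatibility of the base change maps $\Rlim_{B}\To\Rlim_{A}\circ\,u^{*}$ with the isomorphisms $\Rlim_{I}c_{L}\simeq\R\underline{\Hom}(\nerf(I),L)$ and with the factorization $p_{A}=p_{B}\circ u$, and is a routine diagram chase using that $u^{*}$ carries constant functors to constant functors.
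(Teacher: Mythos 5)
Your proposal is correct, and it treats the necessity direction exactly as the paper does (quote Theorem~\ref{weakequivloccstcoh}). For the sufficiency direction, however, you take a genuinely different, dual route. The paper's (second) suggested argument is colimit-based: invoke Proposition~\ref{loccsthomology} with $\V=\mathit{sSet}$ and $F$ the constant functor at the point, and identify $\derL\limind_I$ of that constant functor with $\nerf(I)$ via the Bousfield--Kan construction; the resulting isomorphism $\nerf(A)\To\nerf(B)$ is then $\nerf(u)$. You instead argue limit-based: you compute $\Hom_{\ho([I,\mathit{sSet}])}(c_{\ast},c_{L})\simeq\pi_0\Rlim_I c_L\simeq\Hom_{\ho(\mathit{sSet})}(\nerf(I),L)$ (the dual formula $\Rlim_I c_L\simeq\R\underline{\Hom}(\nerf(I),L)$ to $\derL\limind_I c_{\ast}\simeq\nerf(I)$), then conclude by Yoneda in $\ho(\mathit{sSet})$. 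Both arguments bottom out in the same naturality verification — that the base-change maps, the constant-diagram identification, and the factorization $p_A = p_B\circ u$ fit together so that the abstract bijection is precomposition by $\nerf(u)$; you honestly flag this as the one point requiring a diagram chase, where the paper dismisses its dual form as ``completely obvious.'' A small bonus of your phrasing is that it makes explicit that only full faithfulness of $u^{*}$ (not essential surjectivity) is used; this is equally true of the paper's route via Proposition~\ref{loccsthomology}, whose proof also only uses full faithfulness, but you state it more visibly.
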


\begin{proof}
Theorem \ref{weakequivloccstcoh} asserts this is a necessary condition.
It is very easy to check that this is also sufficient: we can either
use Proposition \ref{loccsthomology} and \cite[6.5.11]{Ci3}, or
we can use the fact that, for a given small category $A$, the
homotopy colimit of the constant functor indexed by $A$
whose value is the terminal simplicial set is
precisely the nerve of $A$ (which is completely obvious
if we consider for example the Bousfield-Kan construction of
homotopy colimits).
\end{proof}

\section{Model structures for locally constant functors}

\begin{paragr}\label{combmodecat}
We consider now a left proper combinatorial model
category $\V$ and a small category $A$
(see \cite{beke1} for the definition of
a combinatorial model category). The category
of functors $[A,\V]$ has two canonical model structures.
The \emph{projective model structure} on $[A,\V]$
is defined as follows: the weak equivalences are the
termwise weak equivalences, and the fibrations are the termwise
fibrations. The \emph{injective model structure} on $[A,\V]$
is defined dually: the weak equivalences are the
termwise weak equivalences, and the cofibrations are the
termwise cofibrations. One can check that the identity
functor is a left Quillen equivalence from
the projective model structure to the injective
model structure (this is just an abstract way to say that
all the cofibrations of the projective model
structure are termwise cofibrations, which is easy to check;
see for example \cite[Lemma 3.1.12]{Ci3}).
These two model structures are left proper.
\end{paragr}

\begin{paragr}
We fix a (regular) cardinal $\alpha$ with the following properties
(see \cite{dug2}).
\begin{itemize}
\item[(a)] Any object of $\V$ is a $\alpha$-filtered colimit
of $\alpha$-small objects.
\item[(b)] The class of weak equivalences of $\V$
is stable by $\alpha$-filtered colimits.
\item[(c)] There exists a cofibrant resolution
functor $Q$ which preserves $\alpha$-filtered colimits. 
\end{itemize}
Given an object $a$ of $A$, we denote by
$$a_!: \V\To [A,\V]$$
the left adjoint to the evaluation functor at $a$.
We define $S$ as the (essentially small) set of maps
of shape
\begin{equation}
a_!(QX)\To a^\prime_!(QX)
\end{equation}
associated to each map $a\To a'$ in $A$ and each
$\alpha$-accessible object $X$ (and $Q$ is some
fixed cofibrant resolution functor satisfying
the condition (c) above).

We define the \emph{projective local model structure}
on $[A,\V]$ as the left Bousfield localization of the
projective model structure on $[A,\V]$ by $S$.
The \emph{injective local model structure} on $[A,\V]$
is the left Bousfield localization of the injective
model structure on $[A,\V]$ by $S$. It is clear that the identity
functor is still a left Quillen equivalence from
the projective local model structure to the injective local
model structure. The weak equivalences of these two model
structures will be called the \emph{local weak equivalences}.
\end{paragr}

\begin{prop}\label{carloccstfibrant}
A functor $F:A\To\V$ is fibrant in the projective
(resp. injective) local model structure if and only
if it is fibrant for the projective (resp. injective)
model structure and if it is locally constant.
\end{prop}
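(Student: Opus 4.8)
The plan is to characterize fibrancy in the local model structure via the right-lifting/local-object description: an object $F$ is fibrant in the left Bousfield localization of the projective (resp. injective) model structure by $S$ if and only if it is fibrant in the projective (resp. injective) model structure and it is $S$-local, meaning that for every map $s : a_!(QX) \To a'_!(QX)$ in $S$, the induced map on derived mapping spaces
$$\derR\sHom(a'_!(QX), F) \To \derR\sHom(a_!(QX), F)$$
is a weak equivalence of simplicial sets. So the whole proof reduces to showing that, for a projectively (resp. injectively) fibrant $F$, being $S$-local in this sense is equivalent to being locally constant. Since the projective and injective model structures have the same weak equivalences and the same fibrant-or-not issue is about termwise fibrancy, and since the localizing set $S$ is the same in both cases, I would treat the two cases uniformly: the only thing used about $F$ will be that it is termwise fibrant (which holds in the injective case automatically and is part of the hypothesis in the projective case), and that $[A,\V]$ with termwise weak equivalences models $\ho([A,\V])$.

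First I would unwind the adjunction $a_! \dashv \mathrm{ev}_a$ together with the fact that $QX$ is a cofibrant resolution of $X$: for $F$ termwise fibrant one gets a natural identification
$$\derR\sHom(a_!(QX), F) \simeq \derR\sHom(QX, F_a) \simeq \derR\sHom(X, F_a)$$
of derived mapping spaces, and under this identification the map induced by $s$ corresponds (up to homotopy) to postcomposition with the map $F_{a} \To F_{a'}$ coming from $a \To a'$ — wait, more precisely, since $s$ goes from $a_!(QX)$ to $a'_!(QX)$, the contravariant $\derR\sHom(-,F)$ turns it into a map from $\derR\sHom(X,F_{a'})$ to $\derR\sHom(X,F_a)$ induced by $F_a \To F_{a'}$. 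Here I am using condition (c) on $Q$ only to know such a resolution exists and is homotopically well-behaved; the accessibility conditions (a)–(b) and the choice of $\alpha$ play no role in this particular proposition, they only guarantee that $S$ is an essentially small set so that the Bousfield localization exists. Thus $F$ is $S$-local if and only if, for every arrow $a \To a'$ of $A$ and every $\alpha$-accessible $X$, the map $\derR\sHom(X, F_a) \To \derR\sHom(X, F_{a'})$ is a weak equivalence of simplicial sets.

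Next I would observe that it suffices to test this on $\alpha$-accessible objects $X$: by condition (a) every object of $\V$ is an $\alpha$-filtered colimit of $\alpha$-small (hence $\alpha$-accessible) objects, and derived mapping spaces out of a homotopy colimit are the homotopy limit of the derived mapping spaces, so if the maps $\derR\sHom(X,F_a)\To\derR\sHom(X,F_{a'})$ are equivalences for all $\alpha$-accessible $X$ they are equivalences for all $X$. Then the map $F_a \To F_{a'}$ induces an isomorphism on $\derR\sHom(X,-)$ for all $X \in \V$, hence (taking $X$ to run over cofibrant objects and using Yoneda in $\ho(\V)$, or invoking the analogue of Proposition \ref{isolocal} for the one-object category) it is an isomorphism in $\ho(\V)$, i.e.\ a weak equivalence in $\V$. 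So $F$ $S$-local $\iff$ every structure map $F_a \To F_{a'}$ is a weak equivalence $\iff$ $F$ is locally constant. Combining this with the characterization of local fibrant objects gives the proposition.

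The main obstacle is the identification in the second paragraph: making precise that $\derR\sHom(a_!(QX),F)$ computes the homotopy function complex $\mathrm{Map}(X,F_a)$ — one must be careful that $a_!$ is left Quillen for the projective structure (so it sends the cofibrant $QX$ to a cofibrant object, and preserves the relevant mapping-space computation), whereas for the injective structure $a_!$ need not be left Quillen, so there one argues instead that $F$ termwise fibrant is injectively fibrant and uses that $\derR\sHom(-,F)$ can be computed with any cofibrant replacement of the source, together with the adjunction isomorphism $\sHom(a_!(QX),F)\cong\sHom(QX,F_a)$ at the point-set level. Once this compatibility is pinned down, everything else is a routine application of the results already proved, in particular Proposition \ref{isolocal} and the general theory of Bousfield localizations.
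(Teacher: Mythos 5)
Your proof is correct and follows essentially the same route as the paper's: local fibrancy equals fibrancy plus $S$-locality; the adjunction $a_!\dashv\mathrm{ev}_a$ translates $S$-locality into a condition on the structure maps $F_a\To F_{a'}$ tested against $\alpha$-accessible objects; conditions (a) and (b) extend this to all objects of $\V$; and a Yoneda argument in $\ho(\V)$ (equivalently Proposition~\ref{isolocal}) concludes. Do note, however, that your parenthetical assertion that conditions (a) and (b) ``play no role in this particular proposition, they only guarantee that $S$ is an essentially small set'' is contradicted by your own penultimate paragraph, which explicitly invokes (a) to write an arbitrary object of $\V$ as an $\alpha$-filtered colimit of $\alpha$-accessible ones, and tacitly uses (b) to know that this ordinary filtered colimit computes the homotopy colimit, so that $\derR\sHom(-,F_a)$ turns it into a homotopy limit. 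That extension step is precisely the role (a) and (b) play in the paper's argument, where they are used to show that local weak equivalences are stable under $\alpha$-filtered colimits; the aside should therefore be deleted or corrected. Also, a small slip in your last paragraph: ``$F$ termwise fibrant is injectively fibrant'' should be the converse implication (injectively fibrant $\Rightarrow$ termwise fibrant), which holds because the identity is left Quillen from the projective to the injective structure; the direction as written is false in general.
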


\begin{proof}
It is sufficient to prove this for the projective local model
structure. Note first that, thanks to condition (b),
for any $\alpha$-filtered category $I$ and any functor $F$ from
$I$ to $[A,\V]$, the natural map
$$\derL\limind_I F\To\limind_I F$$
is an isomorphism in $\ho([A,\V])$. Hence it remains an
isomorphism in the homotopy category of the projective local model
structure. This implies that local weak equivalences are stable
by $\alpha$-filtered colimits.
Conditions (a) and (b) thus imply that for any object
$X$ of $\V$, and any arrow $a\To a'$ in $A$, the map
$$a_!(QX)\To a^\prime_!(QX)$$
is a local weak equivalence.
If $a$ is an object of $A$, $X$ is an object
of $\V$, and $F$ a functor from $A$ to $\V$, then
$$\Hom_{\ho(\V)}(X,F_a)\simeq\Hom_{\ho([A,\V])}(\derL a_!(X),F)
\simeq\Hom_{\ho([A,\V])}(a_!(QX),F).$$
It is now easy to see that, if moreover $F$ is fibrant
for the projective model structure, then it is fibrant
for the projective local model structure if and only
if it is locally constant.
\end{proof}

\begin{cor}\label{lcho}
The localization of $[A,\V]$ by the
class of local weak equivalences is $\lc(A,\V)$.
\end{cor}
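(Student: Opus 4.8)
The plan is to deduce the statement from the general description of the homotopy category of a left Bousfield localization, combined with Proposition~\ref{carloccstfibrant}. Recall that for a left Bousfield localization $L_S\M$ of a model category $\M$ by a set of maps $S$, the identity is a left Quillen functor $\M\To L_S\M$, and the resulting adjunction on homotopy categories identifies the homotopy category of $L_S\M$ with the full subcategory of $\ho(\M)$ spanned by those objects that are isomorphic in $\ho(\M)$ to a fibrant $S$-local object; this full subcategory is moreover reflective in $\ho(\M)$, the reflector being the (derived) $S$-localization functor, and the localization functor $\M\To\ho(L_S\M)$ is, under this identification, the composite of $\M\To\ho(\M)$ with that reflector. (See~\cite{Ci3} for the combinatorial setting.)

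First I would apply this with $\M=[A,\V]$ equipped with the projective model structure, so that $L_S\M$ is the projective local model structure. By Proposition~\ref{carloccstfibrant}, a functor $A\To\V$ is fibrant and $S$-local if and only if it is fibrant for the projective model structure and locally constant. Hence the homotopy category of the projective local model structure is canonically equivalent to the full subcategory of $\ho([A,\V])$ whose objects are the functors isomorphic in $\ho([A,\V])$ to a projectively fibrant locally constant functor.

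It then remains to identify this full subcategory with $\lc(A,\V)$, \ie{} to check that every locally constant functor $F:A\To\V$ is isomorphic in $\ho([A,\V])$ to one which is both projectively fibrant and locally constant. For this, take a fibrant replacement $F\To F'$ in the projective model structure: it is a termwise weak equivalence, so $F_a\To F'_a$ is an isomorphism in $\ho(\V)$ for every object $a$ of $A$, and comparing the squares attached to an arrow $a\To a'$ shows that $F'$ is locally constant. Since, by definition of $\lc(A,\V)$, local constancy of a functor depends only on its isomorphism class in $\ho([A,\V])$, the two full subcategories of $\ho([A,\V])$ coincide.

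Combining these identifications gives an equivalence between the homotopy category of the (projective, hence also injective) local model structure and $\lc(A,\V)$, compatible with the canonical functors out of $[A,\V]$: the resulting functor $[A,\V]\To\lc(A,\V)$ is the composite of $[A,\V]\To\ho([A,\V])$ with the reflector onto the locally constant functors, and it sends local weak equivalences to isomorphisms and is universal with this property. I do not anticipate a real obstacle; the one step deserving care --- and which is essentially the content of the corollary --- is the homotopy invariance of local constancy used above, as this is what makes the subcategory of $S$-local objects literally \emph{equal} to the full subcategory $\lc(A,\V)$ of $\ho([A,\V])$ and not merely equivalent to it.
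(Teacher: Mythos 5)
Your proof is correct and takes exactly the route the paper intends: the corollary is stated without proof immediately after Proposition~\ref{carloccstfibrant}, and the intended argument is precisely the one you give, namely that the homotopy category of a left Bousfield localization is the reflective full subcategory of $S$-local objects in $\ho([A,\V])$, which Proposition~\ref{carloccstfibrant} identifies with the locally constant functors. Your added care in checking that local constancy is preserved by fibrant replacement and is an isomorphism-invariant property in $\ho([A,\V])$ (so that the subcategory of local objects is literally $\lc(A,\V)$ rather than merely equivalent to it) is a worthwhile point that the paper leaves implicit.
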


\begin{cor}
The inclusion functor $\lc(A,\V)\To\ho([A,\V])$
has a left adjoint.
\end{cor}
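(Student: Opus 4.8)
The plan is to combine the previous corollary with a general existence principle for adjoints to localization functors. By Corollary~\ref{lcho}, the category $\lc(A,\V)$ is the localization of $[A,\V]$ at the class of local weak equivalences; moreover, by the construction of the previous section, this localization is modelled by the projective local model structure on $[A,\V]$, which is a left Bousfield localization of the projective model structure. The inclusion $\lc(A,\V)\To\ho([A,\V])$ is then, up to equivalence, the functor induced on homotopy categories by the identity functor viewed as a left Quillen functor from the projective model structure to the projective local model structure. Since left Bousfield localization is, by definition, a left Quillen functor which is the identity on underlying categories, the identity in the other direction is a right Quillen functor, and hence induces a right derived functor $\ho([A,\V])\To\lc(A,\V)$; this right derived functor is right adjoint to the functor induced by the left Quillen functor, which is precisely the inclusion. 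This produces the desired left adjoint.

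First I would make the identification of the inclusion functor precise: using Corollary~\ref{lcho}, fix an equivalence between $\lc(A,\V)$ and the homotopy category of the projective local model structure, compatible with the identity-functor comparison $\ho([A,\V])\To\lc(A,\V)$ on the level of localizations. Next I would invoke the standard fact (for instance from \cite{Ci3} or \cite{beke1}) that a left Bousfield localization $\V'$ of a model category $\V$ yields a Quillen adjunction $(\mathrm{id},\mathrm{id}):\V\rightleftarrows\V'$ in which the left adjoint is the identity on $\V$; deriving this adjunction gives an adjunction between homotopy categories whose left adjoint is the localization functor $\ho(\V)\To\ho(\V')$. Applying this with $\V$ replaced by the projective model structure on $[A,\V]$ and $\V'$ by the projective local model structure, and transporting along the equivalence from the first step, identifies the inclusion $\lc(A,\V)\To\ho([A,\V])$ with a left derived functor and exhibits its right adjoint, which by construction is $\R\mathrm{id}$, the derived fibrant-replacement functor.

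Alternatively, and perhaps more cleanly, one can argue purely $2$-categorically: the inclusion $\lc(A,\V)\hookrightarrow\ho([A,\V])$ becomes, after the identification of Corollary~\ref{lcho}, the functor $\gamma:\ho([A,\V])\To\ho([A,\V]_{\mathrm{loc}})$ induced by inverting the local weak equivalences. A localization functor admits a left adjoint precisely when it admits a fully faithful right adjoint, and the existence of such a right adjoint here is exactly the statement that every object of $[A,\V]$ has a fibrant replacement in the projective local model structure, which holds because that model structure exists. The main obstacle — really the only nontrivial point — is the bookkeeping of the first step: one must check that the equivalence of categories furnished by Corollary~\ref{lcho} is genuinely compatible with the two localization functors out of $\ho([A,\V])$, so that the abstractly produced adjoint is an adjoint to the concrete inclusion functor named in the statement. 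Everything after that identification is formal.
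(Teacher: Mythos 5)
Your strategy is exactly the intended one: realize the inclusion as one half of the derived adjunction coming from the left Bousfield localization of the projective model structure. Unfortunately, as written you have the two sides of that adjunction systematically swapped, and if your sentences are read literally they prove the existence of a \emph{right} adjoint, not the left adjoint asked for.

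Concretely, the Quillen pair is $(\mathrm{id},\mathrm{id}):[A,\V]_{\mathrm{proj}}\rightleftarrows[A,\V]_{\mathrm{loc}}$, with the identity $[A,\V]_{\mathrm{proj}}\to[A,\V]_{\mathrm{loc}}$ as the \emph{left} Quillen functor. Its total left derived functor $\derL\mathrm{id}:\ho([A,\V])\to\lc(A,\V)$ is the localization (reflection) functor, and its total right derived functor $\derR\mathrm{id}:\lc(A,\V)\to\ho([A,\V])$ — not $\ho([A,\V])\to\lc(A,\V)$, as you write — is the inclusion, via Proposition~\ref{carloccstfibrant}: fibrant objects of the local structure are exactly the locally constant projectively fibrant ones, so $\derR\mathrm{id}$ lands in, and is the identity on, $\lc(A,\V)$. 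The left adjoint of the inclusion is therefore $\derL\mathrm{id}$. Your first paragraph instead identifies the inclusion with the functor induced by the \emph{left} Quillen functor (this has the wrong source and target), assigns the right derived functor the wrong source and target as well, and then concludes that the inclusion has a \emph{right} adjoint — which contradicts the stated goal. The second paragraph repeats the error (``identifies the inclusion $\lc(A,\V)\To\ho([A,\V])$ with a left derived functor and exhibits its right adjoint''). The third, ``$2$-categorical'' paragraph compounds it: there you assert that the inclusion \emph{becomes} the localization functor $\gamma:\ho([A,\V])\to\ho([A,\V]_{\mathrm{loc}})$, but these are two different functors pointing in opposite directions; $\gamma$ is the sought left adjoint, not the inclusion itself, and the ``admits a left adjoint precisely when it admits a fully faithful right adjoint'' principle as stated is not a correct general fact and in any case is being applied to the wrong functor. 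Swapping ``left'' and ``right'' (and sources and targets) consistently throughout repairs the argument, and with that repair it coincides with what the paper intends.
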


\begin{prop}\label{functquillenlc}
Let $u:A\To B$ be a functor between small categories.
Then the functor
$$u^*:[B,\V]\To[A,\V]$$
is a right Quillen functor for
the projective local model structures.

If moreover the nerve of $u$ is a simplicial weak equivalence, then
the functor $u^*$ is a right Quillen equivalence.
\end{prop}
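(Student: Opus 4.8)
The plan is to treat the two assertions in turn. For the first one, I would observe that the adjunction $(u_!,u^*)$ is already a Quillen adjunction for the non-local projective model structures on $[A,\V]$ and $[B,\V]$, so it suffices to show that $u_!:[A,\V]\To[B,\V]$ remains a left Quillen functor when the target carries the projective local model structure. Composing with the (left Quillen) identity functor from the projective to the projective local model structure on $[B,\V]$, this amounts to showing that the left Quillen functor $u_!$ from $[A,\V]$ (with the non-local projective structure) to $[B,\V]$ (with the local one) descends to the projective local model structure on $[A,\V]$; by the standard criterion for when a left Quillen functor descends to a left Bousfield localization, this holds as soon as $u_!$ sends the set $S$ defining the projective local structure on $[A,\V]$ to local weak equivalences in $[B,\V]$, since the maps in $S$ are maps between cofibrant objects.

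To check this last condition, I would use that $\mathrm{ev}^A_a\circ u^*=\mathrm{ev}^B_{u(a)}$ for every object $a$ of $A$, whence, passing to left adjoints, $u_!\circ a_!=(u(a))_!$. A generating map of $S$ has the form $a_!(QX)\To a'_!(QX)$ for an arrow $a\To a'$ of $A$ and an $\alpha$-accessible object $X$, and its image under $u_!$ is the map $(u(a))_!(QX)\To(u(a'))_!(QX)$ associated to the arrow $u(a)\To u(a')$ of $B$. But the proof of Proposition \ref{carloccstfibrant} establishes precisely that for every arrow $b\To b'$ of $B$ and every object $X$ of $\V$ the map $b_!(QX)\To b'_!(QX)$ is a local weak equivalence (this is where conditions (a) and (b) on $\alpha$ are used). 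This proves the first assertion.

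For the second assertion I would invoke the characterization that a Quillen adjunction is a Quillen equivalence as soon as the total right derived functor of its right adjoint is an equivalence of homotopy categories. By Corollary \ref{lcho} the homotopy category of the projective local model structure on $[A,\V]$ (resp. $[B,\V]$) is $\lc(A,\V)$ (resp. $\lc(B,\V)$), and I claim that under these identifications $\R u^*$ is the functor \eqref{definvim3}. Indeed, $\R u^*(F)$ is computed by applying $u^*$ to a local-fibrant replacement of $F$; by Proposition \ref{carloccstfibrant} such a replacement is a fibrant locally constant functor, which represents $F$ in $\lc(B,\V)$, and since $u^*$ preserves termwise weak equivalences and sends locally constant functors to locally constant functors, its value on this replacement is a locally constant functor representing the image of $F$ under \eqref{definvim3}. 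Since the nerve of $u$ is a simplicial weak equivalence, Theorem \ref{weakequivloccstcoh} says that \eqref{definvim3} is an equivalence of categories; hence so is $\R u^*$, and $u^*$ is a right Quillen equivalence.

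The only point requiring genuine care is this last identification of $\R u^*$ with \eqref{definvim3}: one must check that, on a locally constant functor, the left adjoint to the inclusion $\lc(B,\V)\To\ho([B,\V])$ is the identity, so that the local-fibrant replacement really does compute the passage to $\lc(B,\V)$, and that this passage is compatible with $u^*$ precisely because $u^*$ preserves local constancy. Everything else is an unwinding of definitions together with results already proved above, so I would expect this compatibility to be the main (and still mild) obstacle.
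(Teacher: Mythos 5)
Your proof is correct, but for the first assertion it takes a genuinely different route from the paper. The paper argues via the Joyal--Tierney criterion: since $u_!$ still preserves cofibrations (cofibrations are unchanged by left Bousfield localization), it is enough to check that $u^*$ preserves fibrations between fibrant objects. By Proposition \ref{carloccstfibrant} these are projective fibrations between projectively fibrant, locally constant diagrams, and $u^*$ visibly preserves this class. You instead verify directly that $u_!$ descends through the localization, by checking that it sends the localizing set $S$ on $[A,\V]$ into local weak equivalences of $[B,\V]$, using the adjoint identity $u_!\circ a_!=(u(a))_!$. In fact you can even bypass the appeal to the proof of Proposition \ref{carloccstfibrant}: since $Q$ and $\alpha$ depend only on $\V$, the image $(u(a))_!(QX)\To(u(a'))_!(QX)$ is literally one of the generating maps of the localizing set on $[B,\V]$, hence a local weak equivalence by definition. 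The paper's approach is slightly shorter because the characterization of fibrant objects does all the work at once, while yours is more explicit about where the generating set and its cofibrancy intervene; both are sound. For the second assertion your identification of $\R u^*$ with \eqref{definvim3} via Corollary \ref{lcho} is precisely the content behind the paper's terse appeal to Theorem \ref{weakequivloccstcoh}, so there you agree with the paper.
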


\begin{proof}
The left adjoint $u_!$ of $u^*$ preserves cofibrations:
this is obviously a left Quillen functor for the
projective model structures.
It is thus sufficient to check that $u^*$
preserves fibrations between fibrant objects;
see \cite[Proposition 7.15]{joytier4}.
It follows from Proposition \ref{carloccstfibrant}
that fibrations between fibrant objects are just
fibrations of the projective model structure
between fibrant objects of the projective model
structure which are locally constant. It is clear that
$u^*$ preserves this property. This proves that $u^*$
is a right Quillen functor. The last assertion
follows from Theorem \ref{weakequivloccstcoh}.
\end{proof}

\begin{rem}\label{serreencore}
According to the preceding proposition,
the functor $u_!$ has a total left derived functor
$$\derL u_! :\lc(A,\V)\To\lc(B,\V)\, .$$
It also has a total left derived functor
$$\derL u_!: \ho([A,\V])\To \ho([A,\V])\, .$$
but, in general, the diagram (in which $i_A$ and $i_B$ denote the inclusion functors)
$$\xymatrix{
\lc(A,\V)\ar[r]^{\derL u_!}\ar[d]_{i_A}&\lc(B,\V)\ar[d]^{i_B}\\
\ho([A,\V])\ar[r]_{\derL u_!}&\ho([B,\V])
}$$
does not (even essentially) commute. There is only a natural map
$$\derL u_!i_A(F)\To i_B\derL u_!(F)\, .$$
However, Proposition \ref{serre} asserts that this natural map is an isomorphism
whenever $u$ is locally constant.
\end{rem}

\begin{prop}\label{functquillenlc2}
Let $u:A\To B$ be a functor between small categories.
Assume that the functor $\op{u}:\op{A}\To\op{B}$
is locally constant.
Then the functor
$$u^*:[B,\V]\To[A,\V]$$
is a left Quillen functor for
the injective local model structures.

If moreover the nerve of $u$ is a simplicial weak equivalence, then
the functor $u^*$ is a left Quillen equivalence.
\end{prop}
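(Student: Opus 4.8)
The plan is to imitate the proof of Proposition~\ref{functquillenlc}, using the injective analogue of its description of fibrant objects, and exploiting the hypothesis on $\op{u}$ to make up for the absence of a genuine ``opposite category'' duality (the category $\op{\V}$ being of the wrong kind). Since $u^*$ preserves termwise cofibrations and termwise weak equivalences, it is a left Quillen functor for the injective model structures; let $u_*$ be its right adjoint, the right Kan extension along $u$, so that $(u^*,u_*)$ is a Quillen pair for the injective model structures and $u_*$ is right Quillen for them. A left Bousfield localization does not change the cofibrations, so $u^*$ preserves those of the injective local model structure; hence, just as in the proof of Proposition~\ref{functquillenlc} (with the r\^oles of the two adjoints interchanged), \cite[Proposition~7.15]{joytier4} reduces the first assertion to showing that $u_*$ carries every fibration between fibrant objects of the injective local model structure on $[A,\V]$ to a fibration between fibrant objects of the injective local model structure on $[B,\V]$.

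The fibrant objects of the injective local model structure are exactly the injectively fibrant locally constant functors (the injective analogue of Proposition~\ref{carloccstfibrant}, or Corollary~\ref{lcho}), and a morphism between two of them is a fibration for the local structure if and only if it is a fibration for the injective model structure. As $u_*$ is right Quillen for the injective model structures, it preserves injective fibrations and injectively fibrant objects; so everything reduces to a single claim: \emph{if $H:A\To\V$ is locally constant and injectively fibrant, then $u_*(H)$ is locally constant}.

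To prove this, fix an object $b$ of $B$; write $(b\downarrow u)$ for the category of pairs $(a,f)$ with $a$ an object of $A$ and $f:b\To u(a)$ a morphism of $B$, and $\pi_b:(b\downarrow u)\To A$ for the projection. The pointwise formula for derived right Kan extensions --- dual to Proposition~\ref{changebase}; see \cite{RadBan} --- gives a natural isomorphism $u_*(H)_b\simeq\Rlim_{(b\downarrow u)}\pi_b^*(H)$ in $\ho(\V)$, under which the structure map $u_*(H)_b\To u_*(H)_{b'}$ attached to an arrow $\beta:b\To b'$ of $B$ becomes restriction along the transition functor $\beta^\#:(b'\downarrow u)\To(b\downarrow u)$, $(a,f)\mapsto(a,f\beta)$. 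Here $\pi_b^*(H)$ is locally constant, being the pullback of the locally constant $H$; and each $\beta^\#$ is a weak equivalence of small categories: indeed $(b\downarrow u)=\op{(\op{A}/b)}$, so the functors $\beta^\#$ are the opposites of the comma-category transition functors of $\op{u}$, and these are weak equivalences precisely because $\op{u}$ is locally constant (a functor and its opposite are weak equivalences simultaneously, since $\op{(\nerf v)}=\nerf(\op{v})$, by Corollary~\ref{corcarweakequivcat}). Consequently the homotopy-limit analogue of Proposition~\ref{loccsthomology} --- proved word for word, with $\Rlim$ in place of $\derL\limind$ and using that a weak equivalence of small categories induces an equivalence on categories of locally constant functors --- shows that $u_*(H)_b\To u_*(H)_{b'}$ is an isomorphism in $\ho(\V)$. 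Hence $u_*(H)$ is locally constant, which finishes the first assertion.

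Finally, assume in addition that the nerve of $u$ is a simplicial weak equivalence. A Quillen pair is a Quillen equivalence as soon as its total left derived functor is an equivalence of homotopy categories; by Corollary~\ref{lcho} those homotopy categories are $\lc(A,\V)$ and $\lc(B,\V)$, and since $u^*$ preserves all termwise weak equivalences its total left derived functor is the functor~\eqref{definvim3}, which is an equivalence of categories by Theorem~\ref{weakequivloccstcoh}; thus $u^*$ is a left Quillen equivalence. The only non-formal point is the claim of the third paragraph --- it is exactly where the hypothesis on $\op{u}$ is used --- and it rests on two results to be recorded beforehand, both obtained by routine dualization of material already available: the pointwise computation of derived right Kan extensions, and the homotopy-limit form of Proposition~\ref{loccsthomology}.
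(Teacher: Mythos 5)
Your proof is correct and, at bottom, takes the same route as the paper: reduce (via the injective case of Proposition~\ref{carloccstfibrant} and the Joyal--Tierney criterion) to showing that $\derR u_*$ preserves locally constant functors. The only real difference is in how that last step is discharged. The paper dispatches it in one line by applying Proposition~\ref{serre} to the opposite model category $\op{\V}$, whereas you rederive it from scratch by the pointwise formula for derived right Kan extensions and a homotopy-limit analogue of Proposition~\ref{loccsthomology}. Your reason for doing so --- the worry that $\op{\V}$ is ``of the wrong kind'' --- is unfounded: Proposition~\ref{serre} is a Section~1 statement, proved for an arbitrary (complete and cocomplete) model category, and $\op{\V}$ is again such a model category; the combinatoriality hypothesis only enters in Section~2 to build the localized model structures, not to run the homotopy Kan extension arguments. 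So the duality shortcut is perfectly legitimate, and your argument amounts to unwinding it: your isomorphism $u_*(H)_b\simeq\Rlim_{(b\downarrow u)}\pi_b^*(H)$ and the claim that the transition functors $\beta^\#$ are weak equivalences are exactly the dual forms of Propositions~\ref{changebase} and~\ref{loccsthomology} that the slick proof invokes via $\op{\V}$. Two small remarks. First, your identification $(b\downarrow u)=\op{(\op{A}/b)}$ and the ensuing translation of the hypothesis on $\op{u}$ are correct, but the sentence justifying that $\beta^\#$ is a weak equivalence (appealing to $\op{(\nerf v)}=\nerf(\op{v})$ and Corollary~\ref{corcarweakequivcat}) tacitly reads ``locally constant'' for $\op{u}$ in the nerve sense rather than the $\V$-relative sense of Section~1; this is in fact the reading that makes the statement robust --- and the same imprecision is already present in the paper's phrasing --- so it is worth saying explicitly. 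Second, your reduction to ``$u_*(H)$ is locally constant'' implicitly uses that in a left Bousfield localization a map between local-fibrant objects is a local fibration iff it is a fibration of the underlying model structure; this is standard but deserves a word.
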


\begin{proof}
We know that $u^*$
is a left Quillen functor for the injective model
structure. Hence, by virtue of Proposition \ref{carloccstfibrant},
it is sufficient to prove that the total right derived functor
$$\derR u_*:\ho([A,\V])\To \ho([B,\V])$$
preserves locally constant functors. But this latter property
is just Proposition \ref{serre} applied to $\op{\V}$.
The last assertion
follows again from Theorem \ref{weakequivloccstcoh}.
\end{proof}

\section{Locally constant coefficients in Grothendieck derivators}

\begin{paragr}
We start this section by fixing some notations.

Given a cocomplete model category $\V$, we denote by $\Ho(\V)$
the derivator associated to $\V$; see \cite[Theorem 6.11]{Ci1}.

Let $A$ be a small category.
We will consider the category $[\op{A},\mathit{sSet}]$ of simplicial
presheaves on $A$ endowed with the projective model
structure.
Given a subcategory $S$ of $A$, we denote by $L_S\pref{A}$
the left Bousfield localization of the projective model
structure on $[\op{A},\mathit{sSet}]$ by $S$
(where $S$ is seen as a set of maps in $[\op{A},\mathit{sSet}]$
via the Yoneda embedding). The fibrant objects of
$L_S\pref{A}$ are the simplicial presheaves $F$ on $A$
which are termwise Kan complexes and which sends the maps of $S$
to simplicial homotopy equivalences. In particular, in case $A=S$,
$L_A\pref{A}$ is the projective local model structure on
$[\op{A},\mathit{sSet}]$ studied in the previous section.
\end{paragr}

\begin{paragr}\label{definternhomder}
Given two prederivators $\der$ and $\der'$, we denote by
$\sHom(\der,\der')$ the category of morphisms of
derivators; see \cite{Ci1}. If $\der$ and $\der'$
are derivators, we denote by $\sHom_!(\der,\der')$
the full subcategory of $\sHom(\der,\der')$
whose objects are the morphisms of prederivators
which preserves left homotopy Kan extensions (which
are called cocontinuous morphisms in \cite{Ci1}).

Given a (small) category, we denote by
$\underline{A}$ the prederivator which associates to
each small category $I$ the category $[\op{I},A]$
of presheaves on $I$ with values in $A$. This defines
a $2$-functor from the $2$-category of small categories
to the $2$-category of prederivators.
Note that we have a Yoneda Lemma for prederivators:
given a small category $A$ and a prederivator $\der$,
the functor
\begin{equation}\label{yonedapreder}
\sHom(\underline{A},\der)\To\der(\op{A})
\quad ,  \qquad F\longmapsto F(1_A)
\end{equation}
is an equivalence of categories.
\end{paragr}

\begin{thm}\label{univ}
For any derivator $\der$,
the composition by the Yoneda embedding $h:\underline{A}\To\Ho([\op{A},\mathit{sSet}])$
induces an equivalence of categories
$$\sHom_!(\Ho([\op{A},\mathit{sSet}]),\der)\simeq\sHom(\underline{A},\der)\, .$$
\end{thm}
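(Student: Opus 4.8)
The plan is to realize $\Ho([\op{A},\mathit{sSet}])$ as the derivator which is "freely generated under homotopy colimits" by $\underline{A}$, and to check the universal property on both sides of the claimed equivalence. First I would recall the two basic inputs: the Yoneda Lemma for prederivators~\eqref{yonedapreder}, which gives $\sHom(\underline{A},\der)\simeq\der(\op{A})$, and the fact that $\Ho([\op{A},\mathit{sSet}])$ is the derivator associated to the projective model category $[\op{A},\mathit{sSet}]$. So the statement is equivalent to producing, for each derivator $\der$, an equivalence
$$\sHom_!(\Ho([\op{A},\mathit{sSet}]),\der)\simeq\der(\op{A})$$
natural in $\der$, compatibly with the Yoneda embedding $h$; in other words, $\Ho([\op{A},\mathit{sSet}])$ represents the $2$-functor $\der\mapsto\der(\op{A})$ on the $2$-category of derivators and cocontinuous morphisms, with universal object $h(1_A)\in\Ho([\op{A},\mathit{sSet}])(\op{A})$. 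Equivalently, $h(1_A)$ is a "free" object: every object of $\der(\op{A})$ extends, uniquely up to unique isomorphism, to a cocontinuous morphism out of $\Ho([\op{A},\mathit{sSet}])$.

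Next I would carry out the argument in two halves. \emph{Full faithfulness.} Given two cocontinuous morphisms $F,G\colon\Ho([\op{A},\mathit{sSet}])\To\der$, I must show that a $2$-morphism $F h\Rightarrow G h$ extends uniquely to $F\Rightarrow G$. The key point is that every object of $[\op{A},\mathit{sSet}]$ is canonically a homotopy colimit of representables: for a simplicial presheaf $X$ one has $X\simeq\hocolim_{(a,\sigma)}\,\underline{a}$ over an appropriate diagram category (the simplicial analogue of the category of elements, or a bar-construction resolution), and this resolution is functorial enough to be seen inside the derivator $\Ho([\op{A},\mathit{sSet}])$. Because $F$ and $G$ preserve homotopy Kan extensions, their values on any object $X$ and on any morphism are determined by their restriction along $h$, functorially; hence the $2$-morphism on representables propagates to all of $\Ho([\op{A},\mathit{sSet}])$, uniquely. \emph{Essential surjectivity.} Given an object $x\in\der(\op{A})$, equivalently a morphism $\varphi\colon\underline{A}\To\der$ via~\eqref{yonedapreder}, I must build a cocontinuous $F\colon\Ho([\op{A},\mathit{sSet}])\To\der$ with $Fh\simeq\varphi$. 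One constructs $F$ by left homotopy Kan extension of $\varphi$ along $h$: on a small category $I$ and a simplicial presheaf $Y$ on $I\times A$, set $F_I(Y)$ to be the homotopy colimit in $\der$ of the corresponding diagram of copies of $x$ indexed by the elements of $Y$. The cocontinuity of $F$ is then formal from the interchange of homotopy colimits, and $Fh\simeq\varphi$ holds because representables are sent to "one-point" diagrams.

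The main obstacle — and the place where the argument is genuinely more than formal — is making precise and checking that every simplicial presheaf is a homotopy colimit of representables \emph{in a way visible to an arbitrary derivator}, and that the induced $F$ is well-defined (independent of the chosen resolution) and honestly cocontinuous, not merely cocontinuous on representables. This requires a robust bar-resolution / category-of-elements construction for simplicial presheaves that is compatible with homotopy Kan extensions, together with a coherence check that the two-variable version (diagrams indexed by $I\times A$) behaves correctly under base change along functors $I'\To I$. I expect this to be handled by invoking the general theory of cocontinuous morphisms and the recognition of $\Ho([\op{A},\mathit{sSet}])$ as the free cocompletion, as developed in \cite{Ci1}; the remaining verifications (naturality in $\der$, compatibility with $h$, the two-out-of-three style bookkeeping) are routine. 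A clean alternative, if one prefers to minimize explicit resolutions, is to first treat the case $A=e$ — showing $\Ho(\mathit{sSet})$ is the free derivator on a point, i.e. $\sHom_!(\Ho(\mathit{sSet}),\der)\simeq\der(e)$ — and then bootstrap to general $A$ using that $[\op{A},\mathit{sSet}]$-valued derivators are diagram categories and that left Kan extension along $\underline{A}\To\underline{e}$ is itself cocontinuous.
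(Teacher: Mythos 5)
The paper does not prove this theorem: its ``proof'' is a one-line citation, \emph{``This is a translation of \cite[Corollary 3.26]{Ci4} using \eqref{yonedapreder}''}, where \cite{Ci4} is Cisinski's ``Propri\'et\'es universelles et extensions de Kan d\'eriv\'ees,'' which establishes that $\Ho([\op{A},\mathit{sSet}])$ is the free cocompletion of $\underline{A}$ in the $2$-category of derivators. So there is no in-paper argument to match against; the intended comparison is with that cited result.

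Measured against that, your sketch has the right shape and correctly isolates where the work lives. The reduction via the prederivator Yoneda Lemma \eqref{yonedapreder} to the statement that $\Ho([\op{A},\mathit{sSet}])$ corepresents $\der\mapsto\der(\op{A})$ on $(\mathcal{Der}_!,\text{cocontinuous})$ is exactly the translation the paper performs. Your two-half strategy (density of representables for full faithfulness; construction of $F$ by left homotopy Kan extension along $h$ for essential surjectivity) is also the strategy of \cite{Ci4}. And you honestly flag the two genuine difficulties: (i) the derivator-level version of ``every presheaf is canonically a homotopy colimit of representables,'' which must be formulated using the diagram-category structure $\der^A$ and is decidedly not a pointwise statement; and (ii) verifying that the Kan-extended $F$ is cocontinuous on the nose and well-defined independently of resolutions, with all the base-change coherences over $I'\To I$. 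None of this is ``formal from interchange of homotopy colimits''---it occupies a sizable portion of \cite{Ci4}---but you do not claim otherwise.

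Two corrections. First, you point to \cite{Ci1} as the place where the heavy lifting is done; the relevant reference is \cite{Ci4} (the paper itself cites Corollary 3.26 there), whereas \cite{Ci1} supplies only the ambient derivator formalism. Second, your ``clean alternative'' of first proving $\sHom_!(\Ho(\mathit{sSet}),\der)\simeq\der(e)$ and then bootstrapping to general $A$ via $\Ho([\op{A},\mathit{sSet}])\simeq\Ho(\mathit{sSet})^{\op{A}}$ is attractive but understates the bootstrap: you would need an identification $\sHom_!(\der'^{\op{A}},\der)\simeq\sHom_!(\der',\der^A)$ for cocontinuous morphisms, together with compatibility of this exponential law with the Yoneda maps, and this is again nontrivial (it is essentially \cite[Proposition 5.8]{Ci4} in disguise). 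In short: your sketch is a fair expansion of what the paper's citation encapsulates, but it should be read as an outline of \cite[Corollary 3.26]{Ci4} rather than as an independent proof.
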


\begin{proof}
This is a translation of \cite[Corollary 3.26]{Ci4} using \eqref{yonedapreder}.
\end{proof}

\begin{paragr}
We denote by $\sHom_S(\underline{A},\der)$
the full subcategory of morphisms $\underline{A}\To\der$
such that the induced functor $A\To\der(e)$
sends the maps of $S$ to isomorphims (where $e$
denotes the terminal category).
A formal consequence of Theorem \ref{univ} is:
\end{paragr}

\begin{thm}\label{univ2}
For any derivator $\der$,
the composition by the Yoneda morphism $h:\underline{A}\To\Ho(L_S\pref{A})$
induces an equivalence of categories
$$\sHom_!(\Ho(L_S\pref{A}),\der)\simeq\sHom_S(\underline{A},\der)\, .$$
\end{thm}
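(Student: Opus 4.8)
The plan is to deduce Theorem~\ref{univ2} from Theorem~\ref{univ} by identifying, on both sides, the effect of passing from $\Ho([\op{A},\mathit{sSet}])$ to its left Bousfield localization $\Ho(L_S\pref{A})$. First I would recall the universal property of left Bousfield localization at the level of derivators: if $\gamma:\Ho([\op{A},\mathit{sSet}])\To\Ho(L_S\pref{A})$ denotes the localization morphism, then for any derivator $\der$, precomposition with $\gamma$ identifies $\sHom_!(\Ho(L_S\pref{A}),\der)$ with the full subcategory of $\sHom_!(\Ho([\op{A},\mathit{sSet}]),\der)$ consisting of those cocontinuous morphisms $\Phi$ which send the maps of $S$ to isomorphisms in $\der(e)$ (equivalently, which factor through $\gamma$). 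This is the derivator analogue of the ordinary $1$-categorical universal property of a localization, combined with the fact that a left Bousfield localization is in particular a localization of the underlying categories; it is exactly what makes the word ``formal'' in the sentence preceding the statement legitimate.

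Next I would chase the Yoneda identification through this factorization. Under the equivalence $\sHom_!(\Ho([\op{A},\mathit{sSet}]),\der)\simeq\sHom(\underline{A},\der)$ of Theorem~\ref{univ}, given by $\Phi\mapsto\Phi\circ h$, a cocontinuous morphism $\Phi$ corresponds to the morphism of prederivators $\underline{A}\To\der$ whose component at $e$ is the functor $A\To\der(e)$, $a\mapsto\Phi(h_a)$. Since $h$ sends an arrow $a\To a'$ of $S$ to the corresponding map $h_a\To h_{a'}$ (the image of $S$ under the Yoneda embedding), the morphism $\Phi$ sends the maps of $S$ to isomorphisms in $\der(e)$ if and only if the associated functor $A\To\der(e)$ sends the maps of $S$ to isomorphisms, i.e.\ if and only if the corresponding object of $\sHom(\underline{A},\der)$ lies in the full subcategory $\sHom_S(\underline{A},\der)$. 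Thus the equivalence of Theorem~\ref{univ} restricts to an equivalence between the full subcategory of $S$-inverting cocontinuous morphisms out of $\Ho([\op{A},\mathit{sSet}])$ and $\sHom_S(\underline{A},\der)$.

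Combining the two previous steps: the composition $h:\underline{A}\To\Ho(L_S\pref{A})$ is by definition $\gamma\circ h$, so precomposition with it is the composite of precomposition with $\gamma$ and precomposition with the original $h$. The first restricts $\sHom_!(\Ho(L_S\pref{A}),\der)$ isomorphically onto the $S$-inverting cocontinuous morphisms out of $\Ho([\op{A},\mathit{sSet}])$, and the second restricts that subcategory equivalently onto $\sHom_S(\underline{A},\der)$; composing gives the desired equivalence $\sHom_!(\Ho(L_S\pref{A}),\der)\simeq\sHom_S(\underline{A},\der)$.

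The main obstacle is the first step: making precise, and justifying, the universal property of $\Ho(L_S\pref{A})$ among derivators with respect to cocontinuous morphisms. One must check that a cocontinuous morphism $\Phi$ out of $\Ho([\op{A},\mathit{sSet}])$ factors through $\Ho(L_S\pref{A})$ precisely when it inverts $S$, and that this factorization is essentially unique and functorial in $\der$ in a way compatible with the $2$-categorical structure. This can be handled either by invoking the general theory of Bousfield localizations of derivators (the localization morphism $\gamma$ is itself cocontinuous and exhibits $\Ho(L_S\pref{A})$ as the localization of $\Ho([\op{A},\mathit{sSet}])$ inverting $S$ in the $2$-category of derivators and cocontinuous morphisms), or by appealing directly to the results of \cite{Ci4} on presentations of derivators by model categories, of which Theorem~\ref{univ} is already a special case. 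Once this localization property is in hand, everything else is the bookkeeping of the two paragraphs above.
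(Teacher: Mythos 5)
Your proposal matches the paper's proof: the paper deduces Theorem~\ref{univ2} from Theorem~\ref{univ} together with the universal property of left Bousfield localization of derivators (citing \cite[Theorem 5.4]{tabuada} for the latter), which is precisely the decomposition you describe and correctly identify as the key point to be justified. The only difference is that the paper leaves the bookkeeping of matching $S$-inverting cocontinuous morphisms with $\sHom_S(\underline{A},\der)$ implicit, whereas you spell it out.
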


\begin{proof}
This follows immediately from Theorem \ref{univ}
and from the universal property of left Bousfield localization
for derivators; see \cite[Theorem 5.4]{tabuada}.
\end{proof}

\begin{paragr}
Given a small category $A$ and a derivator $\der$, we define
\begin{equation}
\lc(A,\der)=\sHom_A(\underline{A},\der)\, .
\end{equation}
It is clear that for a model category $\V$, we have by definition
\begin{equation}
\lc(A,\V)=\lc(A,\Ho(\V))\, .
\end{equation}
\end{paragr}

\begin{cor}
Let $u:A\To B$ be a functor between small categories. Then
the nerve of $u$ is a simplicial weak equivalence if and only if for
any derivator $\der$, the functor
$$u^*:\lc(B,\der)\To\lc(A,\der)$$
is an equivalence of categories.
\end{cor}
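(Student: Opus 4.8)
The plan is to translate, by means of Theorem~\ref{univ2}, the assertion about the functors $u^*$ on locally constant coefficients into a statement about the cocontinuous morphism of derivators induced by $u$, and then to invoke Proposition~\ref{functquillenlc} and Corollary~\ref{corcarweakequivcat}.

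For the translation, apply Theorem~\ref{univ2} with $S=A$, so that $L_A\pref{A}$ is the projective local model structure on $[\op{A},\mathit{sSet}]$ of the previous section and $\lc(A,\der)=\sHom_A(\underline{A},\der)$; composition with the Yoneda morphism $h\colon\underline{A}\To\Ho(L_A\pref{A})$ then yields an equivalence $\sHom_!(\Ho(L_A\pref{A}),\der)\simeq\lc(A,\der)$, natural in $\der$, and likewise for $B$. Since $u_!\colon[\op{A},\mathit{sSet}]\To[\op{B},\mathit{sSet}]$ takes the representable associated with a map $a\To a'$ of $S_A=A$ to the one associated with $u(a)\To u(a')$ in $S_B=B$, it is a left Quillen functor $L_A\pref{A}\To L_B\pref{B}$; write $\phi=\derL u_!\colon\Ho(L_A\pref{A})\To\Ho(L_B\pref{B})$ for the associated cocontinuous morphism. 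As representables are projectively cofibrant and $u_!$ is left Quillen, $\derL u_!(h_a)\simeq u_!(h_a)\simeq h_{u(a)}$, so $\phi\circ h\simeq h\circ\underline{u}$; hence, under the equivalences above, the functor $u^*\colon\lc(B,\der)\To\lc(A,\der)$ is identified with precomposition by $\phi$. It therefore suffices to decide when $\phi$ is an equivalence of derivators, since precomposition by an equivalence of derivators is an equivalence of categories for every $\der$.

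Assume first that the nerve of $u$ is a simplicial weak equivalence. Then so is the nerve of $\op{u}\colon\op{A}\To\op{B}$, which is the opposite simplicial set of the nerve of $u$, and passage to opposite simplicial sets preserves simplicial weak equivalences (it does not change geometric realizations up to homeomorphism). Applying Proposition~\ref{functquillenlc} to the functor $\op{u}$ and the model category $\mathit{sSet}$, the functor $(\op{u})^*\colon L_B\pref{B}\To L_A\pref{A}$ is a right Quillen equivalence; its left adjoint is the functor $u_!$ considered above, so on the associated derivators $\phi=\derL u_!$ is an equivalence, and therefore $u^*\colon\lc(B,\der)\To\lc(A,\der)$ is an equivalence for every derivator $\der$. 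Conversely, assume $u^*\colon\lc(B,\der)\To\lc(A,\der)$ is an equivalence for every derivator $\der$. Specialising to $\der=\Ho(\V)$ for an arbitrary model category $\V$ and using the identity $\lc(A,\V)=\lc(A,\Ho(\V))$ (the functor $u^*$ being the same on both sides), we obtain that $u^*\colon\lc(B,\V)\To\lc(A,\V)$ is an equivalence for every model category $\V$, and Corollary~\ref{corcarweakequivcat} then shows that the nerve of $u$ is a simplicial weak equivalence.

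The only step that is not pure bookkeeping is the passage, in the first implication, from ``$(\op{u})^*$ is a right Quillen equivalence'' to ``$\phi$ is an equivalence of derivators'': this relies on the fact that a right Quillen equivalence between left proper combinatorial model categories remains a Quillen equivalence after passing to the diagram categories $[\op{I},-]$ equipped with their projective model structures, so that it induces an equivalence of the associated derivators — and this is where the left properness and combinatoriality of $L_A\pref{A}$ and $L_B\pref{B}$ are used. The remaining verifications (that $u_!$ is a left Quillen functor for the local model structures, that $\derL u_!$ sends $h_a$ to $h_{u(a)}$, and that the equivalences of Theorem~\ref{univ2} intertwine $u^*$ with precomposition by $\phi$) follow formally from the universal properties already established.
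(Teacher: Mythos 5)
Your proof is correct and follows the same route as the paper: the implication from ``$u^*$ is an equivalence for every derivator'' to ``the nerve of $u$ is a weak equivalence'' is obtained by specialising to $\der=\Ho(\V)$ and invoking Corollary~\ref{corcarweakequivcat}, while the converse is obtained by applying Proposition~\ref{functquillenlc} to $\op{u}$ together with Theorem~\ref{univ2} and the fact that a Quillen equivalence induces an equivalence of derivators. You simply make explicit the Yoneda bookkeeping (namely that $\phi\circ h\simeq h\circ\underline{u}$, so that under Theorem~\ref{univ2} the functor $u^*$ becomes precomposition by $\phi=\derL u_!$) which the paper treats as understood; one very minor quibble is that ``Quillen equivalence $\Rightarrow$ equivalence of derivators'' is a general fact and does not specifically rely on left properness or combinatoriality of the localized model structures — those hypotheses are used earlier, to ensure the Bousfield localizations $L_A\pref{A}$ exist at all.
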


\begin{proof}
As any model category gives rise to a derivator, this is certainly
a sufficient condition, by virtue of Corollary \ref{corcarweakequivcat}.
It thus remains to prove that this is a necessary condition.
The nerve of the functor $u$ is a simplicial weak equivalence
if and only if the nerve of $\op{u}:\op{A}\To\op{B}$
is so. This result is thus a consequence of Proposition \ref{functquillenlc},
of Theorem \ref{univ2}, and of the fact that any Quillen equivalence
induces an equivalence of derivators.
\end{proof}

\begin{lemma}\label{inclusionlccocontinue}
Let $A$ be a small category.
The inclusion morphism
$$i:\Ho(L_A\pref{A})\To\Ho([\op{A},\mathit{sSet}])$$
(defined as the right adjoint of the localization morphism)
preserves left homotopy Kan extensions.
\end{lemma}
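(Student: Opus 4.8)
The plan is to reduce the cocontinuity of $i$ to the statement that homotopy left Kan extensions in $\Ho([\op{A},\mathit{sSet}])$ preserve local objects, and then to verify that statement by a pointwise computation which makes it essentially a triviality.

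Recall first that the identity functor is a left Quillen functor from the projective model structure on $[\op{A},\mathit{sSet}]$ to $L_A\pref{A}$, so it induces a cocontinuous morphism of derivators $\gamma:\Ho([\op{A},\mathit{sSet}])\To\Ho(L_A\pref{A})$ whose right adjoint is the morphism $i$ of the statement; since $L_A\pref{A}$ is a left Bousfield localization, $i$ is fully faithful and the co-unit $\gamma i\To\mathrm{id}$ is invertible, so for each small category $I$ the functor $i:\Ho(L_A\pref{A})(I)\To\Ho([\op{A},\mathit{sSet}])(I)$ is the inclusion of the full subcategory of local objects, with left adjoint the reflection. A standard computation with mates then shows that, for a functor $u:I\To J$, the canonical comparison map $\derL u_!\,iX\To i\,u_!X$ (where $u_!$ on the right is the homotopy left Kan extension in the derivator $\Ho(L_A\pref{A})$) is, under the identification $\gamma\,\derL u_!\,iX\simeq u_!X$ provided by cocontinuity of $\gamma$, the localization unit $\derL u_!\,iX\To i\gamma(\derL u_!\,iX)$; it is therefore invertible if and only if $\derL u_!\,iX$ is a local object. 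Hence $i$ preserves left homotopy Kan extensions if and only if, for every $u:I\To J$, the functor $\derL u_!$ of $\Ho([\op{A},\mathit{sSet}])$ carries local objects to local objects.

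It remains to prove this last assertion. By the description of the fibrant objects of $L_A\pref{A}$ recalled in the text, an object $X$ of $\Ho([\op{A},\mathit{sSet}])(I)$ — that is, a simplicial presheaf on $A\times I$ — is local if and only if for every object $i$ of $I$ and every arrow $a\To a'$ of $A$ the map $X_{(a,i)}\To X_{(a',i)}$ is a weak equivalence; writing $X_a$ for the presheaf on $I$ obtained from $X$ by freezing the $A$-coordinate at $a$, this says precisely that the transition map $X_a\To X_{a'}$ is a weak equivalence in $[\op{I},\mathit{sSet}]$ for every arrow $a\To a'$ of $A$. I claim that homotopy left Kan extension along $u:I\To J$ in $\Ho([\op{A},\mathit{sSet}])$ is computed pointwise over $A$, in the sense that there is a natural isomorphism $(\derL u_!\,X)_a\simeq\derL u_!(X_a)$ of presheaves on $J$. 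Indeed, the projective model structure on $[\op{I},[\op{A},\mathit{sSet}]]$ coincides with the projective model structure on $[\op{(A\times I)},\mathit{sSet}]$ (the generating cofibrations and the colimits both being formed pointwise over $A$), and the restriction functor freezing the $A$-coordinate is left Quillen for the projective model structures — a quick check on generating cofibrations — so it carries a projective cofibrant replacement of $X$ to a projective cofibrant replacement of $X_a$; since moreover $u_!$ is computed by colimits which are pointwise over $A$, the claimed formula follows.

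The conclusion is then immediate: if $X$ is local, then for every arrow $a\To a'$ of $A$ the map $X_a\To X_{a'}$ is a weak equivalence, hence so is $\derL u_!(X_a)\To\derL u_!(X_{a'})$, because $\derL u_!$ is the total left derived functor of a left Quillen functor and so preserves weak equivalences; by the pointwise formula this means that $(\derL u_!\,X)_a\To(\derL u_!\,X)_{a'}$ is a weak equivalence for every arrow of $A$, i.e. that $\derL u_!\,X$ is again local. By the criterion of the second paragraph, $i$ preserves left homotopy Kan extensions. The only slightly delicate point in the argument is the pointwise-over-$A$ description of $\derL u_!$, and more precisely the verification that freezing the $A$-coordinate is a left Quillen functor for the projective model structures; once this is in place, everything else is purely formal.
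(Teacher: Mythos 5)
Your argument is correct. The key steps all check out: the identification of the image of $i$ with the local objects, the mate computation showing that $i$ preserves $\derL u_!$ precisely when $\derL u_!$ (computed in $\Ho(\pref{A})$) carries local objects to local objects, the verification that the ``freeze the $A$-coordinate'' evaluation is left Quillen for the projective model structures, and the resulting pointwise-over-$A$ formula $(\derL u_!X)_a\simeq\derL u_!(X_a)$. The conclusion that locality is preserved then follows at once.

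The route you take is, however, a bit different from the paper's. The paper first invokes a general structural fact about derivators (\cite[Proposition 2.6]{Ci4}): a morphism of derivators preserves all left homotopy Kan extensions as soon as it preserves homotopy colimits. This reduces the statement to the special case $J=e$, i.e.\ to the observation that locally constant diagrams are stable under homotopy colimits in a category of simplicial presheaves, which is then dismissed as obvious (again by pointwiseness over $A$). You bypass this reduction and treat an arbitrary $u:I\To J$ directly, which forces you to establish the pointwise formula for $\derL u_!$ in general rather than only for colimits. What the paper's route buys is brevity and a cleaner conceptual separation (one cited lemma plus a one-line observation); what your route buys is a self-contained argument that makes the mechanism explicit and doesn't rely on the external lemma about colimits versus Kan extensions. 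Both hinge on the same essential observation --- that the $A$-direction is inert under homotopy Kan extension in the $I$-direction.

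One very minor point: in your description of locality you write the transition map as $X_{(a,i)}\To X_{(a',i)}$, whereas for a presheaf on $A\times I$ an arrow $a\To a'$ of $A$ induces $X_{(a',i)}\To X_{(a,i)}$; this has no effect on the argument since the condition is symmetric (it asks that the map be a weak equivalence), but it is worth keeping the variance straight.
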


\begin{proof}
It is sufficient to check that it preserves
homotopy colimits; see \cite[Proposition 2.6]{Ci4}.
This reduces to check that locally constant
functors are stable by homotopy colimits
in the model category of simplicial presheaves
on a small category, which is obvious.
\end{proof}

\begin{prop}\label{leftrightlc}
For any derivator $\der$ and any small category $A$,
the inclusion functor
$$\lc(A,\der)\To\sHom(\underline{A},\der)$$
has a left adjoint and a right adjoint.
\end{prop}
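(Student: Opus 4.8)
The plan is to deduce both adjunctions from the universal properties already established, using the localization morphism $L_A\pref{A}\To[\op{A},\mathit{sSet}]$ and its right adjoint $i$ studied in Lemma~\ref{inclusionlccocontinue}. First I would reinterpret the situation via Theorem~\ref{univ2}: the category $\lc(A,\der)=\sHom_A(\underline{A},\der)$ is equivalent to $\sHom_!(\Ho(L_A\pref{A}),\der)$, while $\sHom(\underline{A},\der)$ is equivalent, by the Yoneda Lemma \eqref{yonedapreder}, to $\der(\op{A})$, and by Theorem~\ref{univ} to $\sHom_!(\Ho([\op{A},\mathit{sSet}]),\der)$. Under these identifications, the inclusion functor $\lc(A,\der)\To\sHom(\underline{A},\der)$ is precisely the functor induced by \emph{precomposition} with the localization morphism $\ell\colon\Ho([\op{A},\mathit{sSet}])\To\Ho(L_A\pref{A})$, i.e. the functor $\ell^{*}\colon\sHom_!(\Ho(L_A\pref{A}),\der)\To\sHom_!(\Ho([\op{A},\mathit{sSet}]),\der)$. (One must check the compatibility: the Yoneda morphism into $\Ho(L_A\pref{A})$ is $\ell$ composed with the Yoneda embedding into $\Ho([\op{A},\mathit{sSet}])$, which is exactly how Theorem~\ref{univ2} is stated.)

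Next I would produce the two adjoints of $\ell^{*}$ by precomposing with the two adjoints of $\ell$ on the level of morphisms of derivators. The localization morphism $\ell$ is cocontinuous by construction, and its right adjoint $i$ is cocontinuous by Lemma~\ref{inclusionlccocontinue}; moreover $\ell$ has a further left adjoint (indeed $\Ho([\op{A},\mathit{sSet}])$ and $\Ho(L_A\pref{A})$ are triangulated/stable-type derivators arising from combinatorial model categories, so the Bousfield localization morphism admits both a left and a right adjoint, the right one being the fully faithful inclusion $i$). Given an adjunction of derivators, precomposition induces an adjunction in the other direction on the categories $\sHom_!(-,\der)$: if $f\colon\der_1\To\der_2$ is cocontinuous with cocontinuous right adjoint $g$, then $f^{*}\colon\sHom_!(\der_2,\der)\To\sHom_!(\der_1,\der)$ has $g^{*}$ as a \emph{left} adjoint, and symmetrically for the left adjoint of $f$. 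Applying this with $f=\ell$, $g=i$ (and with the left adjoint of $\ell$) yields a right adjoint $i^{*}$ and a left adjoint to the functor $\ell^{*}$, hence to the inclusion $\lc(A,\der)\To\sHom(\underline{A},\der)$.

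Finally I would assemble the general case $A$ arbitrary: the statement is about $\sHom(\underline{A},\der)$ for \emph{any} derivator $\der$, and all three functors in sight are natural in $\der$, so it suffices to have carried out the above for a fixed $\der$. The main obstacle is not conceptual but bookkeeping: one has to verify carefully that precomposition with an adjunction of (pre)derivators restricts to the subcategories of cocontinuous morphisms — i.e. that $g^{*}$ of a cocontinuous morphism is again cocontinuous when $g$ is cocontinuous, and that the unit and counit of the induced adjunction again lie in $\sHom_!$ — and that the identification of the inclusion $\lc(A,\der)\hookrightarrow\sHom(\underline{A},\der)$ with $\ell^{*}$ is strictly compatible with the equivalences of Theorems~\ref{univ} and~\ref{univ2}. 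Once these compatibilities are in place, the two adjunctions follow formally; this is why I expect the verification that the Bousfield localization morphism $\ell$ genuinely admits \emph{both} adjoints in the $2$-category of derivators (the left adjoint in particular) to be the point requiring the most care, possibly invoking the combinatorial-model-category hypotheses and \cite{tabuada} or \cite{Ci4}.
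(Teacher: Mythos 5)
Your reduction to Theorems~\ref{univ} and~\ref{univ2} and the identification of the inclusion with $\gamma^*$ (precomposition with the localization morphism) matches the paper, and the formal fact that precomposition with an adjunction in $\mathcal{Der}_!$ yields an adjunction in the opposite direction on $\sHom_!(-,\der)$ is correct. This gives the \emph{left} adjoint $i^*$, exactly as in the paper, using Lemma~\ref{inclusionlccocontinue} to know $i$ is cocontinuous.

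The construction of the \emph{right} adjoint is where there is a genuine gap. You claim that the localization morphism $\ell=\gamma\colon\Ho([\op{A},\mathit{sSet}])\To\Ho(L_A\pref{A})$ ``has a further left adjoint'' on the grounds that these are ``triangulated/stable-type derivators arising from combinatorial model categories.'' This is false on two counts. First, these derivators are \emph{not} stable: $\mathit{sSet}$ with the Kan--Quillen structure is not a stable model category, and neither is any left Bousfield localization of a projective model structure on simplicial presheaves — suspension is not an equivalence. Second, even in the stable setting, a left Bousfield localization morphism does not generically admit a left adjoint; a reflective localization only comes equipped with a right adjoint (the inclusion of local objects), and the existence of an additional coreflection is a strong and rare condition (a recollement), not a consequence of combinatoriality or stability. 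So the claimed $j\dashv\ell$ does not exist, and the right adjoint of $\gamma^*$ cannot be obtained this way.

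The paper obtains the right adjoint by a different device: it applies the already-established left-adjoint result to the \emph{opposite derivator} $\op{\der}$ and to $\op{A}$, then transposes back. Concretely, the localization morphism for $\op{A}$, after applying the contravariant $2$-functor $\sHom_!(-,\op{\der})$ and unwinding the dualities, produces a right adjoint to the inclusion $\lc(A,\der)\To\sHom(\underline{A},\der)$. You should replace your invented left adjoint of $\ell$ with this duality argument; the rest of your reasoning then goes through.
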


\begin{proof}
We have a localization morphism
$$\gamma:\Ho([\op{A},\mathit{sSet}])\To\Ho(L_A\pref{A})$$
which has a right adjoint in the $2$-category of prederivators
$$i:\Ho(L_A\pref{A})\To\Ho([\op{A},\mathit{sSet}])\, .$$
We know that $\gamma$ is cocontinuous (as it
comes from a left Quillen functor; see \cite[Proposition 6.2]{Ci1}).
The previous lemma asserts that $i$ is cocontinuous
as well. It thus follows from the fact $\sHom_!(-,\der)$
is $2$-functor and from Theorem \ref{univ2} that
the inclusion functor
$$\lc(A,\der)\To\sHom(\underline{A},\der)$$
(which is induced by $\gamma$) has a left adjoint
(which is induced by $i$). Applying this to
the opposite derivator $\op{\der}$ (and replacing $A$ by
$\op{A}$) also gives a right adjoint.
\end{proof}

\begin{cor}
Let $u:A\To B$ be a functor between small categories.
For any derivator $\der$, the inverse image functor
$$u^*:\lc(B,\der)\To\lc(A,\der)$$
has a left adjoint and a right adjoint.
\end{cor}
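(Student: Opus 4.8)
The plan is to present $u^{*}:\lc(B,\der)\To\lc(A,\der)$ as the restriction of a functor between the ambient categories $\sHom(\underline{B},\der)$ and $\sHom(\underline{A},\der)$, where the required adjoints are manifest, and then to transport those adjoints through the reflection and coreflection of $\lc$ into $\sHom$ furnished by Proposition~\ref{leftrightlc}.

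First, write $i_{A}:\lc(A,\der)\To\sHom(\underline{A},\der)$ for the fully faithful inclusion, and likewise $i_{B}$. The square
$$\xymatrix{
\lc(B,\der)\ar[r]^{u^{*}}\ar[d]_{i_{B}}&\lc(A,\der)\ar[d]^{i_{A}}\\
\sHom(\underline{B},\der)\ar[r]_{u^{*}}&\sHom(\underline{A},\der)
}$$
commutes strictly, both horizontal arrows being precomposition with the morphism of prederivators $\underline{u}:\underline{A}\To\underline{B}$: since $\underline{u}$ carries the arrows of $A$ to arrows of $B$, any morphism $\underline{B}\To\der$ inverting the arrows of $B$ gives, after composition with $\underline{u}$, a morphism $\underline{A}\To\der$ inverting the arrows of $A$, so the bottom arrow does restrict to the top one.

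Next I would analyse the bottom arrow. By the Yoneda Lemma for prederivators \eqref{yonedapreder}, evaluation at the identity identifies $\sHom(\underline{A},\der)$ with $\der(\op{A})$ and $\sHom(\underline{B},\der)$ with $\der(\op{B})$, and under these equivalences precomposition with $\underline{u}$ becomes the inverse image functor $\der(\op{u}):\der(\op{B})\To\der(\op{A})$ (a direct, if notation-heavy, unwinding of the definition of the Yoneda morphism of a prederivator). As $\der$ is a derivator, $\der(\op{u})$ admits a left adjoint $L$ and a right adjoint $R$, namely the homotopy left and right Kan extensions along $\op{u}$; pulling these back along the Yoneda equivalences, the bottom arrow of the square has a left adjoint $L$ and a right adjoint $R$.

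Finally, let $r_{B}\dashv i_{B}\dashv s_{B}$ be the adjoints of $i_{B}$ given by Proposition~\ref{leftrightlc}. A purely formal computation then shows that $r_{B}\circ L\circ i_{A}$ is left adjoint, and $s_{B}\circ R\circ i_{A}$ right adjoint, to the top arrow $u^{*}$: for $F$ in $\lc(A,\der)$ and $G$ in $\lc(B,\der)$,
$$
\begin{aligned}
\Hom(r_{B}L\,i_{A}F,G)
&\simeq\Hom(L\,i_{A}F,i_{B}G)
\simeq\Hom(i_{A}F,u^{*}i_{B}G)\\
&\simeq\Hom(i_{A}F,i_{A}u^{*}G)
\simeq\Hom(F,u^{*}G),
\end{aligned}
$$
using successively $r_{B}\dashv i_{B}$, $L\dashv u^{*}$, commutativity of the square, and full faithfulness of $i_{A}$; the argument for $R$ is dual. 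The only step that really deserves attention is the identification of the previous paragraph, that precomposition with $\underline{u}$ corresponds to $\der(\op{u})$ under the Yoneda equivalence \eqref{yonedapreder}; I expect this to be routine, so that no genuine obstacle arises, the substance of the statement having already been absorbed into Proposition~\ref{leftrightlc} and the derivator axioms for $\der$.
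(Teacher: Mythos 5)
Your proof is correct, and it is exactly the unfolding the paper leaves implicit (the corollary is stated without proof, as an immediate consequence of Proposition~\ref{leftrightlc}). The three ingredients you isolate — the strictly commuting square exhibiting $u^*$ on $\lc$ as a restriction of precomposition with $\underline{u}$, the Yoneda identification $\sHom(\underline{A},\der)\simeq\der(\op{A})$ under which the ambient $u^*$ becomes $\der(\op{u})$ and hence has both Kan-extension adjoints by the derivator axioms, and the reflection/coreflection $r_B\dashv i_B\dashv s_B$ from Proposition~\ref{leftrightlc} — combine in the standard way, and your $\Hom$-computation verifying $r_B L\, i_A\dashv u^*$ (and dually $u^*\dashv s_B R\, i_A$) is sound, the full faithfulness of $i_A$ being used at precisely the right place. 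No gaps.
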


\begin{paragr}
It is possible to construct a prederivator $\lcc(A,\der)$
such that
\begin{equation}
\lcc(A,\der)(e)=\lc(A,\der)
\end{equation}
(where $e$ still denotes the terminal category).
If $\der$ is a derivator, and $A$ is a small category,
then we define a derivator $\der^A$ by the formula
\begin{equation}
\der^A(I)=\der(\op{A}\times I)\, .
\end{equation}
It is easy to see that $\der^A$ is again a derivator.
Moreover, the homotopy colimits in $\der^A$
can be computed termwise; see \cite[Proposition 2.8]{Ci4}.
In the case where $\der=\Ho(\V)$ for a model category $\V$,
we get the formula
\begin{equation}
\Ho(\V)^A(I)=\ho([A\times \op{I},\V])\, .
\end{equation}
The prederivator $\lcc(A,\der)$ is the full subprederivator
of $\der^A$ defined by the formula
\begin{equation}
\lcc(A,\der)(I)= \lc(A,\der^{\op{I}})\, .
\end{equation}
In other words, $\lcc(A,\der)(I)$ is the full subcategory of
$\der(\op{A}\times I)$ whose objects are the objects $F$
of $\der(\op{A}\times I)$ such that the induced functor
$$\mathsf{dia}(F):A\To [\op{I},\der(e)]$$
sends any morphism of $A$ to isomorphisms.
\end{paragr}

\begin{thm}\label{leftrightlcder}
For any small category $A$, and any derivator $\der$,
the prederivator $\lcc(A,\der)$ is a derivator, and
the full inclusion
$$\lcc(A,\der)\To\der^A$$
has a left adjoint and a right adjoint.
\end{thm}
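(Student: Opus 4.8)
The plan is to reduce the statement to the case of the terminal category $A = e$ (where $\der^e = \der$ and $\lcc(e,\der) = \der$, so there is nothing to prove) by exhibiting $\lcc(A,\der)$ as a left Bousfield localization of $\der^A$ in the $2$-category of derivators, and then invoking the abstract theory of such localizations. Concretely, first I would identify the derivator $\der^A$ with a derivator of ``presheaves'': since $\der^A(I) = \der(\op A \times I)$, the prederivator $\der^A$ plays the role of $\underline{A}$-indexed objects of $\der$, and by the Yoneda Lemma for prederivators \eqref{yonedapreder} together with Theorem \ref{univ} there is an equivalence $\sHom_!(\Ho([\op A,\mathit{sSet}]),\der^{\op I}) \simeq \sHom(\underline{A},\der^{\op I}) \simeq \der^A(I)$, functorially in $I$. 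In other words, $\der^A \simeq \sHom_!(\Ho([\op A,\mathit{sSet}]),\der)$ computed at varying base, and under this identification $\lcc(A,\der)(I) = \lc(A,\der^{\op I}) = \sHom_A(\underline{A},\der^{\op I})$ corresponds to $\sHom_!(\Ho(L_A\pref{A}),\der^{\op I})$ by Theorem \ref{univ2}.

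Next I would run the same argument as in the proof of Proposition \ref{leftrightlc}, but parametrized over all base categories $I$ rather than just over $e$. We have the localization morphism $\gamma : \Ho([\op A,\mathit{sSet}]) \To \Ho(L_A\pref{A})$ with right adjoint $i$, and by Lemma \ref{inclusionlccocontinue} both $\gamma$ and $i$ are cocontinuous. Hence for every derivator $\D$ the induced functor $\gamma^* : \sHom_!(\Ho(L_A\pref{A}),\D) \To \sHom_!(\Ho([\op A,\mathit{sSet}]),\D)$ admits a left adjoint (induced by $i$), and by the same reasoning applied to $\op\D$ and $\op A$, a right adjoint. Applying this with $\D = \der^{\op I}$ for each $I$, and checking that these adjoints are compatible with the restriction and Kan extension functors along morphisms $I \To I'$ of base categories — which follows from the $2$-functoriality of $\sHom_!(-,\der)$ and from naturality of the adjunctions — yields the full inclusion $\lcc(A,\der) \To \der^A$ together with a left adjoint and a right adjoint at the level of prederivators.

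It then remains to check that $\lcc(A,\der)$ is a derivator. The cleanest route is: the inclusion $\lcc(A,\der) \To \der^A$, having both adjoints, is in particular stable under the homotopy Kan extension operations coming from $\der^A$ (a full subprederivator which is both a reflective and a coreflective subprederivator of a derivator, with the inclusion preserving the relevant structure, is again a derivator — this is the derivator analogue of the fact that a (co)reflective subcategory of a (co)complete category is (co)complete). One verifies the axioms of a derivator (existence of homotopy Kan extensions, the pointwise-computation/base-change axiom, and conservativity of the family of evaluation functors) by transporting them across the reflection/coreflection; the conservativity axiom is precisely encoded in the defining condition that $F \in \lcc(A,\der)(I)$ iff $\mathsf{dia}(F)$ sends morphisms of $A$ to isomorphisms, combined with conservativity in $\der^A$. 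The main obstacle I expect is this last verification: making precise that a full subprederivator of a derivator which is simultaneously reflective and coreflective, via cocontinuous inclusion, is a derivator, and checking the base-change axiom for $\lcc(A,\der)$ — this is where one must be careful that the left and right adjoints to the inclusion genuinely commute with the structural functors $u^*$, $u_!$, $u_*$ of $\der^A$, rather than merely existing objectwise. Everything else is a formal unwinding of Theorems \ref{univ} and \ref{univ2} and of Proposition \ref{leftrightlc}.
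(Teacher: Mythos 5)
Your overall strategy is the same as the paper's: identify $\lcc(A,\der)(I)$ with $\sHom_!(\Ho(L_A\pref{A}),\der^{\op{I}})$ via Theorem~\ref{univ2}, similarly identify $\der^A(I)$ with $\sHom_!(\Ho(\pref{A}),\der^{\op{I}})$ via Theorem~\ref{univ}, and then transport the adjunction $\gamma:\Ho(\pref{A})\rightleftarrows\Ho(L_A\pref{A}):i$ (both morphisms cocontinuous, by Lemma~\ref{inclusionlccocontinue}) through the $2$-functor $\sHom_!(-,\der^{\op{I}})$, passing to $\op{\der}$ and $\op{A}$ for the other adjoint. So far this is exactly what the paper does.

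The gap is in how you conclude that $\lcc(A,\der)$ is a derivator. You write down the identifications $\lcc(A,\der)(I)\simeq\sHom_!(\Ho(L_A\pref{A}),\der^{\op{I}})$ for each $I$, but you treat this as a collection of objectwise equivalences and then try to recover the derivator axioms ``by hand,'' via the proposed general principle that a full subprederivator of a derivator which is both reflective and coreflective (with cocontinuous inclusion) is a derivator. You yourself flag this as ``the main obstacle,'' and indeed it is: the conservativity axiom and the base-change axiom for the candidate derivator $\lcc(A,\der)$ are not formal consequences of having a left and a right adjoint to the inclusion, and your proposal does not actually verify them — it merely asserts that one should be able to. What the paper does instead is to notice that the collection of equivalences you wrote down is itself an equivalence of \emph{prederivators} onto $\derhom_!(\Ho(L_A\pref{A}),\der)$, where $\derhom_!(-,-)$ is the internal Hom of derivators defined by $\derhom_!(\der,\der')(I)=\sHom_!(\der,{\der'}^{\op{I}})$, and it is a previously established fact (\cite[Proposition~5.8]{Ci4}) that $\derhom_!(\der,\der')$ is a derivator whenever $\der$ and $\der'$ are. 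Once you package your objectwise identifications this way, the derivator property for $\lcc(A,\der)$ is immediate, and the adjoints come out in a single stroke from $2$-functoriality of $\derhom_!(-,\der)$, with none of the ``check compatibility with restriction and Kan extension along $I\To I'$'' bookkeeping. So: the missing ingredient is recognizing that your levelwise formula already defines the known derivator $\derhom_!(\Ho(L_A\pref{A}),\der)$; without that input, your reflective/coreflective-subprederivator argument is an unproved assertion and the proof is incomplete.
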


\begin{proof}
The proof will follow essentially the same lines
as the proof of Proposition \ref{leftrightlc}.

Recall that there is an internal Hom for prederivators:
if $\der$ and $\der'$ are prederivators, we define
a prederivator $\derhom(\der,\der')$ by the formula
$$\derhom(\der,\der')(I)=\sHom(\der,{\der^{\prime}}^{\op{I}})$$
for any small category $I$; see \cite[Corollary 5.3]{Ci4}.
If moreover $\der$ and $\der'$ are derivators, we define
a prederivator $\derhom_!(\der,\der')$ as a full subprederivator
of $\derhom(\der,\der')$ as follows: for each small category $I$,
we put
$$\derhom_!(\der,\der')(I)=\sHom_!(\der,{\der^{\prime}}^{\op{I}})\, .$$
Then $\derhom_!(\der,\der')$ is again a derivator; see \cite[Proposition 5.8]{Ci4}.
Theorem \ref{univ} gives the following result.
If $A$ is a small category, then
for any derivator $\der$, the Yoneda map
$h:\underline{A}\To\Ho(\pref{A})$
induces an equivalence of derivators
$$\derhom_!(\Ho(\pref{A}),\der)\simeq\derhom(\underline{A},\der)=\der^A\, .$$
Similarly, Theorem \ref{univ2} implies that the Yoneda
map $h:\underline{A}\To\Ho(L_A\pref{A})$
induces an equivalence of derivators
$$\derhom_!(\Ho(L_A\pref{A}),\der)\simeq\lcc(A,\der)\, .$$
Thanks to Lemma \ref{inclusionlccocontinue} and to the fact
that $\derhom_!(-,\der)$ is a $2$-functor, the adjunction
$$\gamma:\Ho(\pref{A})\rightleftarrows\Ho(L_A\pref{A}):i$$
thus induces an adjunction
$$i^*:\derhom_!(\Ho(\pref{A}),\der)\rightleftarrows\derhom_!(\Ho(L_A\pref{A}),\der):\gamma^*\, .$$
In particular, we see that $\lcc(A,\der)$ is a derivator (as it is equivalent
to the derivator $\derhom_!(\Ho(L_A\pref{A}),\der)$), and
we get an adjunction of derivators
$$\der^A\rightleftarrows\lcc(A,\der)\, .$$
Applying this to the opposite derivator $\op{\der}$ gives the other adjoint.
\end{proof}

\begin{rem}
The preceding result can be interpreted as follows in terms
of model categories. Consider a small category $A$ and a complete
and cocomplete model category $\V$. Then $\Ho(\V)$
is a derivator, so that $\lcc(A,\Ho(\V))$ is a derivator as well.
Denote by $\mathit{LC}(A,\V)$ the full subcategory of
$[A,\V]$ whose objects are the locally constant functors.
One can then verify that the prederivator associated to
the category $\mathit{LC}(A,\V)$ (by inverting the termwise weak equivalences)
is canonically equivalent to $\lcc(A,\Ho(\V))$; this can be expressed by the formula
$$\Ho(\mathit{LC}(A,\V))\simeq\lcc(A,\Ho(\V))\, .$$
This means that the left Bousfield localizations discussed in \ref{combmodecat}
for combinatorial model categories always exist in the setting
of derivators. Theorem \ref{leftrightlcder} implies that
such Bousfield localizations actually exist in the setting
of ABC~cofibration categories developped in \cite{RadBan}.
\end{rem}

\section{Galois correspondence and homotopy distributors}

\begin{paragr}
Let $A$ and $B$ be small categories. We get from Theorem \ref{univ2}
the following canonical equivalence of categories
\begin{equation}\label{galois1}\begin{split}\begin{aligned}
\sHom_!(\Ho(L_B\pref{B}),\Ho(L_A\pref{A}))&\simeq\sHom_B(\underline{B},\Ho(L_A\pref{A}))\\
&\simeq\ho(L_{A\times \op{B}}[\op{A}\times B,\mathit{sSet}])\, .
\end{aligned}\end{split}\end{equation}
Moreover, we have an equivalence of categories
\begin{equation}\label{galois2}
\ho(L_{A\times \op{B}}[\op{A}\times B,\mathit{sSet}])\simeq\ho(\cat/ A\times B)
\end{equation}
where $\ho(\cat/ A\times B)$ denotes the localization
of the category of small categories over $A\times B$ by the class of functors
(over $A\times B$) whose nerve are simplicial weak equivalences;
this follows for example from \cite[Corollaries 4.4.20 and 6.4.27]{Ci3}
and from the fact $B$ and $\op{B}$ have the same homotopy type.

The induced equivalence of categories
\begin{equation}\label{galois3}
S:\ho(\cat/ A\times B)\To\sHom_!(\Ho(L_A\pref{A}),\Ho(L_B\pref{B}))
\end{equation}
can be described very explicitely: its composition with the localization
functor from $\cat/ A\times B$ to $\ho(\cat/ A\times B)$ is the functor
\begin{equation}\label{galois5}
s:\cat/ A\times B\To\sHom_!(\Ho(L_A\pref{A}),\Ho(L_B\pref{B}))
\end{equation}
which can be described as follows. Consider a functor $C\To A\times B$.
It is determined by a pair of functors $p:C\To A$ and $q:C\To B$.
The functor $q$ induces an inverse image morphism
\begin{equation}\label{galois6}
q^* : L_B\pref{B}\To L_C\pref{C}
\end{equation}
which happens to be a right Quillen functor for
the projective local model structures; see Proposition \ref{functquillenlc}.
It thus defines a continuous morphism of derivators (see \cite[Proposition 6.12]{Ci1})
\begin{equation}\label{galois7}
\derR q^* : \Ho(L_B\pref{B})\To\Ho(L_C\pref{C})\ .
\end{equation}
Using the equivalences of type $\Ho(L_B\pref{B})\simeq\lcc(\op{B},\Ho(\mathit{sSet}))$,
we see that $\derR q^*$ corresponds to the restriction
to the derivators $\lcc(\op{B},\Ho(\mathit{sSet}))$
and $\lcc(\op{C},\Ho(\mathit{sSet}))$ of the
inverse image map $q^*:\der^{\op{B}}\To\der^{\op{C}}$
for $\der=\Ho(\mathit{sSet})$ (which
is cocontinuous, by virtue of \cite[Proposition 2.8]{Ci4}).
We thus conclude from Lemma \ref{inclusionlccocontinue} that
\eqref{galois7} is also cocontinuous. The functor $p$ induces a left Quillen
functor for the projective local model structures (by Proposition \ref{functquillenlc}
again)
\begin{equation}\label{galois8}
p_! : L_C\pref{C}\To L_A\pref{A}\ .
\end{equation}
This defines a cocontinuous morphism of derivators (by the dual
of \cite[Proposition 6.12]{Ci1})
\begin{equation}\label{galois9}
\derL p_! : \Ho(L_C\pref{C})\To\Ho(L_A\pref{A})\ .
\end{equation}
The functor \eqref{galois5} is simply defined by sending the pair $(p,q)$
to the composition of \eqref{galois7} and \eqref{galois9}.
\begin{equation}\label{galois10}
s(p,q)=\derL p_!\derR q^* : \Ho(L_B\pref{B})\To\Ho(L_A\pref{A})\ .
\end{equation}
\end{paragr}

\begin{prop}\label{galoisexactder}
Given a functor $(p,q):C\To A\times B$, the following conditions
are equivalent.
\begin{itemize}
\item[(a)] The morphism \eqref{galois10}
is continuous (i.e. preserves homotopy limits).
\item[(b)] The morphism \eqref{galois9} is continuous.
\item[(c)] The functor
$\derL p_!\derR q^*:\ho(L_B\pref{B})\To\ho(L_A\pref{A})$
preserves terminal objects.
\item[(d)] The functor
$\derL p_! : \ho(L_C\pref{C})\To\ho(L_A\pref{A})$
preserves terminal objects.
\item[(e)] The morphism \eqref{galois9} is an equivalence of
derivators.
\item[(f)] The functor
$\derL p_! : \ho(L_C\pref{C})\To\ho(L_A\pref{A})$
is an equivalence of categories.
\item[(g)] The nerve of $p$ is a simplicial weak equivalence.
\end{itemize}
\end{prop}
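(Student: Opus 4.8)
The plan is to close a cycle of implications among the seven conditions; all but one step are formal. For the formal steps: \eqref{galois7} is continuous, so, a composite of continuous morphisms being continuous, $(b)\Rightarrow(a)$; a continuous morphism of derivators preserves homotopy limits, in particular terminal objects, so $(a)\Rightarrow(c)$; and since \eqref{galois7} also preserves the terminal object and sends it to the terminal object of $\ho(L_C\pref C)$, we have $\derL p_!\,\derR q^{*}(\ast)\simeq\derL p_!(\ast)$, so $(c)\Rightarrow(d)$. An equivalence of derivators is continuous, so $(e)\Rightarrow(b)$; its value at the terminal category is an equivalence of categories, so $(e)\Rightarrow(f)$; an equivalence of categories preserves terminal objects, so $(f)\Rightarrow(d)$. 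Finally, if the nerve of $p$ is a simplicial weak equivalence, then by Proposition \ref{functquillenlc} the functor $p^{*}:L_A\pref A\To L_C\pref C$ is a right Quillen equivalence, hence $p_!$ a left Quillen equivalence, hence \eqref{galois9} an equivalence of derivators: $(g)\Rightarrow(e)$. At this point every one of the seven conditions implies $(d)$, and $(g)$ implies all of them; it remains to prove $(d)\Rightarrow(g)$.

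For $(d)\Rightarrow(g)$ the delicate point, flagged in Remark \ref{serreencore}, is that the functor \eqref{galois9} — which is $\derL p_!$ computed for the \emph{local} model structures — is not $\derL p_!$ computed for the projective ones, so the nerve of $p$ cannot be extracted by merely including locally constant objects back into simplicial presheaves. The device is to compose with a homotopy colimit. Let $\ast_C$ (resp.\ $\ast_A$) denote the terminal object of $\ho(L_C\pref C)$ (resp.\ $\ho(L_A\pref A)$): it is the constant simplicial presheaf with value the point, which is fibrant and local, hence is also the terminal object of $\ho(\pref C)$ (resp.\ $\ho(\pref A)$) and is preserved by the inclusion $i_C$ (resp.\ $i_A$). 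Since $p^{*}(\ast_A)=\ast_C$ and the target is terminal, condition $(d)$ says exactly that the counit $\derL p_!(\ast_C)\To\ast_A$ is an isomorphism in $\ho(L_A\pref A)$; apply to it the functor $\derL\limind_{\op A}\circ i_A:\ho(L_A\pref A)\To\ho(\mathit{sSet})$. Now $\derL\limind_{\op A}:\ho(\pref A)\To\ho(\mathit{sSet})$ inverts local weak equivalences, since it carries each localizing map $a_!(QX)\To a^{\prime}_!(QX)$ to an identity (because $\op A\To e$ takes the object $a$ to the identity of $e$, so $\derL\limind_{\op A}\circ\derL a_!\simeq\mathrm{id}$); moreover, although $i_A$ does not intertwine the two versions of $\derL p_!$, the localization $\gamma_A$ does (being induced by the identity functor), so, using this together with the pseudo-functoriality of derived left Kan extensions along $\op C\To\op A\To e$, the counit is transported to a map $\derL\limind_{\op C}(\ast_C)\To\derL\limind_{\op A}(\ast_A)$. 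As recalled in the proof of Corollary \ref{corcarweakequivcat}, the homotopy colimit of a constant diagram with value the point is the nerve of the indexing category; hence $\derL\limind_{\op C}(\ast_C)\simeq N(\op C)\simeq N(C)$ and $\derL\limind_{\op A}(\ast_A)\simeq N(A)$, naturally in the category, and the transported map is the map $N(p):N(C)\To N(A)$ induced by $p$. Therefore $N(p)$ is an isomorphism in $\ho(\mathit{sSet})$, i.e.\ a simplicial weak equivalence, which is $(g)$.

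The only real work is this last implication, and within it the one genuinely delicate move is to avoid comparing $\derL p_!$ for the two model structures directly — forbidden by Remark \ref{serreencore} — and instead to route the information through the homotopy colimit, which is legitimate precisely because that functor inverts local weak equivalences. Everything else is routine bookkeeping with (co)continuous morphisms of derivators, adjunctions, and two-out-of-three.
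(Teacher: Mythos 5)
Your proof is correct, but your key step is genuinely different from the paper's. You close the cycle by proving $(d)\Rightarrow(g)$ directly, composing the counit $\derL p_!(\ast_C)\To\ast_A$ with the homotopy colimit $\derL\limind_{\op A}$, observing that this functor inverts local weak equivalences (since it trivializes the localizing maps), and using the base-change identity $\derL\limind_{\op A}\circ\derL p_!\simeq\derL\limind_{\op C}$ plus the Bousfield--Kan computation $\derL\limind_{\op I}(\ast)\simeq N(I)$ to recognize the transported map as $N(p)$. The paper instead proves $(c)\Rightarrow(g)$ by invoking the equivalence $\ho(L_X\pref X)\simeq\ho(\cat/X)$ from \eqref{galois2}, under which $\derL p_!$ becomes composition with $p$ and $\derR q^*$ becomes homotopy base change; preservation of the terminal object $1_B$ then says immediately that $C\to A$ is terminal in $\ho(\cat/A)$, i.e.\ $p$ is a weak equivalence. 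The paper also argues $(e)\Leftrightarrow(f)$ separately, whereas you absorb this into the cycle. The trade-off: your route is more self-contained and elementary, staying within the machinery already set up in Sections~1--2 (homotopy colimits, counits, Remark~\ref{serreencore}), whereas the paper's route is shorter here but leans on the deeper rectification equivalence $\ho(\cat/X)\simeq\ho(L_X\pref X)$, a black box from~\cite{Ci3}. One small presentational remark: you borrow the notation $a_!(QX)\To a'_!(QX)$ from Section~2, but in the Section~3 setting the localizing maps are the Yoneda images $h_a\To h_{a'}$; the argument (that $\derL\limind_{\op A}$ sends them to weak equivalences of contractible spaces) goes through verbatim, but the notation is slightly off. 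Also, the claim that the transported map is $N(p)$ silently uses the naturality of the base-change comparison $\derL\limind_{\op C}p^*\To\derL\limind_{\op A}$ at constant diagrams; this is standard but worth flagging, as it is exactly where the counit gets identified with the nerve map.
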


\begin{proof}
The functor \eqref{galois8} is a left Quillen equivalence
(for the projective local model structures) if and only
if for any small category $I$, the induced functor
$$p_! : [I,L_C\pref{C}]\To[I,L_A\pref{A}]$$
is a left Quillen equivalence. This proves that the conditions
(e) and (f) are equivalent. It is obvious that condition (e)
implies condition (b). The fact that condition (g) implies
condition (f) can be obtained,
for example, using Theorem \ref{weakequivloccstcoh}.
It is clear that condition (b) implies conditions (a) and (d), and
that conditions (a) or (d) implies condition (c). To finish the proof, we
will show that the condition (c) implies (g).

Under the equivalences of type $\ho(L_X\pref{X})\simeq\ho(\cat/X)$,
the functor $\derL p_!$ corresponds to the functor from $\ho(\cat/A)$
to $\ho(\cat/A)$ which is induced by composition with $p$.
Similarly, the functor $\derR q^*$
corresponds to the functor from $\ho(\cat/B)$ to $\ho(\cat/C)$
which sends a functor $X\To B$ to the projection $X\times^h_B C\To C$
(where $X\times^h_B C$ denotes the homotopy fiber product of $X$
and $C$ over $B$). These descriptions show immediately that
the condition (c) implies (g). This ends the proof.
\end{proof}

\begin{paragr}
We refer to \cite{Mal,Mal2,Ci3} for the notion of smooth functor
and of proper functor (with respect to the minimal basic localizer).
The first reason we are interested by this notion is that
these functors have very good properties with respect to
homotopy Kan extensions; see \cite[Section 3.2]{Mal}.
The second reason of our interest for this class of functors is
the following statement.
\end{paragr}

\begin{prop}\label{catbrown}
The category of small categories is endowed with a
structure of category of fibrant objects in the sense of Brown~\cite{Br},
for which the weak equivalences are the functors whose nerve
is a simplicial weak equivalence, and the fibrations are the
smooth and proper functors. Moreover, the factorizations
into a weak equivalence followed by a fibration can
be made functorially.
\end{prop}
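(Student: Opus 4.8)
The plan is to verify Brown's axioms for a category of fibrant objects: existence of finite products (with a terminal object), a reasonable notion of weak equivalence satisfying two-out-of-three, a class of fibrations that is stable under pullback along arbitrary maps and closed under composition and containing all isomorphisms, the fact that pullbacks of trivial fibrations are trivial fibrations, and the factorization axiom. The terminal object is the terminal category $e$, and the product is the cartesian product of small categories; one checks easily that the projections $A\times B\To A$ are smooth and proper, hence fibrations. That weak equivalences (functors with nerve a simplicial weak equivalence) satisfy two-out-of-three is immediate, since the nerve is a functor and weak equivalences of simplicial sets satisfy two-out-of-three. Every isomorphism of categories is certainly smooth and proper, and a trivial fibration is just a smooth and proper functor whose nerve is a simplicial weak equivalence.

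Next I would assemble the closure properties of smooth and proper functors. Both classes are closed under composition and stable under base change; this is exactly the content of \cite[Section 3.2]{Mal} (and \cite{Mal2,Ci3}), so I would invoke those references rather than reprove them. The one genuinely homotopy-theoretic point is that the pullback of a trivial fibration along an arbitrary functor is again a trivial fibration: given a smooth (or dually proper) functor $p:X\To B$ whose nerve is a weak equivalence and an arbitrary $v:B'\To B$, the pullback $X'=X\times_B B'\To B'$ is smooth (stable under base change), and the smoothness (resp. properness) base-change theorem for homotopy Kan extensions \cite[Section 3.2]{Mal} identifies, fibre by fibre over $B'$, the strict fibres of $X'\To B'$ with the homotopy fibres of $p$; since $p$ is a weak equivalence its homotopy fibres over every object of $B$ are aspherical, hence so are the fibres of $X'\To B'$, and then a standard argument (asphericity of a smooth or proper functor can be tested fibrewise — this is the categorical analogue of Corollary \ref{aspheraspher}, using that $B'/b'$ has a terminal object) shows that $X'\To B'$ is aspherical, i.e.\ a weak equivalence. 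Alternatively, and perhaps more cleanly, one notes that a smooth or proper functor $p$ is a weak equivalence if and only if all its fibres $p^{-1}(b)$ are aspherical, and this condition is manifestly stable under base change.

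The remaining and main point is the functorial factorization of an arbitrary functor $u:A\To B$ as a weak equivalence followed by a smooth and proper functor. Here I would use the mapping-path construction internal to $\cat$: form the category $P(u)$ whose objects are triples $(a,\, b\To b',\, u(a)=b)$ — concretely, the pullback of $u:A\To B$ against the "target" functor $\op{B}^{[1]}\to B$ from the arrow category, i.e.\ $P(u)=A\times_B B^{\mathrm{Isofib}}$ where $B^{\mathrm{Isofib}}\To B$ is an appropriate replacement of the arrow category giving a smooth and proper target map. There is a canonical section $A\To P(u)$ sending $a$ to $(a,1_{u(a)})$, which is a weak equivalence because it admits a deformation retraction (the source category of the arrow involved has a terminal object, so the relevant fibres are aspherical — this is precisely where Proposition \ref{termobjectasph} enters), and a canonical projection $P(u)\To B$ which one arranges to be smooth and proper by choosing the arrow-category replacement correctly; composing, $u$ factors as $A\To P(u)\To B$. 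Functoriality of this construction in $u$ is clear from its description as an iterated pullback. The genuine obstacle is getting the right replacement of the arrow category of $B$ so that the target map is simultaneously smooth \emph{and} proper while the source map remains a weak equivalence; this requires the explicit combinatorics of smoothness and properness from \cite{Mal,Mal2,Ci3}, and I expect that is where the real work lies, the rest being formal verification of Brown's axioms.
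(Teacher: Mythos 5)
Your plan correctly identifies the axioms to check and the right references for the closure properties of smooth and proper functors; the verification of trivial-fibration stability via the fibrewise criterion (a smooth or proper functor is a weak equivalence iff its strict fibres are aspherical, a criterion manifestly stable under base change since the strict fibres of a pullback coincide with those of the original map) is a valid reformulation of what the paper extracts from \cite[Corollaires 6.4.8, 6.4.18]{Ci3} and \cite[Proposition 3.2.6]{Mal}. The paper also records, via \cite[Proposition 6.4.14]{Ci3}, the stronger right-properness statement that the pullback of \emph{any} weak equivalence along a smooth and proper functor is a weak equivalence; your fibrewise argument gives only the trivial-fibration case, though that is indeed the only Brown axiom strictly required.

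The genuine gap is the factorization, and to your credit you flag it yourself. Your mapping-cocylinder idea is the right intuition, but as you present it the construction is a black box: the naive comma categories ($B\downarrow u$, $u\downarrow B$, or the pullback against the arrow category $B^{[1]}\to B$) do \emph{not} in general give a smooth and proper projection to $B$, and what ``appropriate replacement of the arrow category'' means is precisely the content you would need to supply. The paper's proof does not produce this from scratch either --- it invokes \cite[Theorem 5.3.14]{Ci3}, which is a substantial theorem of Cisinski asserting the existence of functorial factorizations through a weak equivalence followed by a smooth and proper functor (the construction there is considerably more delicate than a bare comma category, involving the simplex-like subdivision machinery that makes the two-sided fibration conditions hold simultaneously). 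Without either citing that theorem or actually constructing such a $P(u)$ and verifying both smoothness and properness of $P(u)\to B$ together with the section $A\to P(u)$ being a weak equivalence, your proof of the factorization axiom is not complete; since this is the one step the paper cannot dispose of by formal closure properties, it is the crux of the proposition.
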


\begin{proof}
Any functor to the terminal category is smooth and proper
(so that any small category will be fibrant).
Functors which are smooth and proper are stable
under base change and composition (see \cite[Corollary 3.2.4
and Proposition 3.2.10]{Mal}). It follows from
\cite[Corollaries 6.4.8 and 6.4.18]{Ci3}
and from \cite[Proposition 3.2.6]{Mal} that
the class of trivial fibrations (i.e. of
smooth and proper functors which are weak equivalences)
is stable by pullbacks.
By virtue of \cite[Proposition 6.4.14]{Ci3}, the pullback of
a weak equivalence by a smooth and proper functor
is a weak equivalence. To finish the proof, it is sufficient
to prove that any functor can factor (functorially) through a weak
equivalence followed by a smooth and proper functor, which
is a consequence of \cite[Theorem 5.3.14]{Ci3}.
\end{proof}

\begin{cor}\label{catbrowncor}
The localization of the full subcategory of
$\cat/A\times B$ whose objects are the
functors $(p,q):C\To A\times B$
such that $p$ and $q$ are smooth and proper
by the class of weak equivalences is canonicaly
equivalent to $\ho(\cat/A\times B)$.
\end{cor}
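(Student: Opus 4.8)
The plan is to recognize that we are in a purely formal situation about categories of fibrant objects: whenever one has a category of fibrant objects (in the sense of Brown) with functorial factorizations, the localization can be computed using only the fibrant objects equipped with a fibration to the terminal object — but here we work relative to the base $A\times B$, so the relevant statement is that the full subcategory of ``fibrant objects over $A\times B$'' computes the same localization as the whole slice. Concretely, Proposition \ref{catbrown} endows $\cat$ with a structure of category of fibrant objects whose weak equivalences are the functors with nerve a simplicial weak equivalence and whose fibrations are the smooth and proper functors, with functorial factorizations into a weak equivalence followed by a fibration. I would first observe that the fibrant objects of the slice category $\cat/A\times B$ (for the induced structure) are exactly the functors $(p,q):C\To A\times B$ that are smooth and proper as functors to $A\times B$, and that — since smooth and proper functors are stable under composition and any functor to the terminal category is smooth and proper — such a functor is smooth and proper precisely when both composites $p:C\To A$ and $q:C\To B$ are smooth and proper (using stability under base change along the projections and the fact that $A\times B\To A$ and $A\times B\To B$ are smooth and proper).

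The second step is to invoke the standard fact that for a category of fibrant objects with functorial factorization, the inclusion of the full subcategory of fibrant objects into the ambient category (here: the slice $\cat/A\times B$, which already has all objects ``fibrant'' over the terminal category but not over $A\times B$) induces an equivalence on localizations. More precisely, the functorial factorization of Proposition \ref{catbrown}, applied fiberwise over $A\times B$ (i.e. to a functor $C\To A\times B$, factor it functorially as $C\To C'\To A\times B$ with $C\To C'$ a weak equivalence and $C'\To A\times B$ smooth and proper), provides a functorial fibrant replacement in $\cat/A\times B$ together with a natural weak equivalence to it. Since the replacement of an object already landing in the subcategory is again in the subcategory and connected to it by a weak equivalence, and since weak equivalences between fibrant-over-$A\times B$ objects are detected exactly as in the subcategory, a routine argument with calculus of fractions (or simply the universal property of localization applied to the replacement functor and the unit) shows that the localization of the subcategory maps equivalently onto $\ho(\cat/A\times B)$.

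The main obstacle — and the only genuinely non-formal point — is verifying that the category-of-fibrant-objects structure of Proposition \ref{catbrown} really does induce one on the slice $\cat/A\times B$ in a way compatible with taking the full subcategory of objects smooth and proper over the base, and that the functorial factorization descends to the slice. This is where one must be careful: one needs the pullback of a weak equivalence along a smooth and proper functor to again be a weak equivalence (which is exactly \cite[Proposition 6.4.14]{Ci3}, already used in the proof of Proposition \ref{catbrown}), and one needs that applying the functorial factorization to an object $C\To A\times B$ yields, after composing with the two projections, replacements that are smooth and proper over $A$ and over $B$ respectively — which follows from the identification of fibrant objects of the slice described above. Once these compatibilities are in place, the equivalence $\ho$ of the subcategory with $\ho(\cat/A\times B)$ is formal, and combining it with the equivalences \eqref{galois1}, \eqref{galois2} identifies this localization with $\sHom_!(\Ho(L_A\pref{A}),\Ho(L_B\pref{B}))$ as well.
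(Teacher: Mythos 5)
Your approach is the right one, and essentially the intended one: Proposition \ref{catbrown} supplies a functorial factorization of any $C\To A\times B$ into a weak equivalence $C\To C'$ followed by a smooth and proper functor $C'\To A\times B$; composing with the smooth and proper projections $A\times B\To A$ and $A\times B\To B$ shows that $C'$ lies in the stated subcategory, and the resulting replacement functor with its natural weak equivalence unit yields the equivalence of localizations by the standard argument. However, one claim in your write-up is overstated and, as stated, false: you assert that $(p,q)\colon C\To A\times B$ is smooth and proper \emph{precisely when} both $p\colon C\To A$ and $q\colon C\To B$ are. Only the forward implication holds. The converse fails already for the diagonal $\Delta\colon A\To A\times A$ with $A=\{0<1\}$: here $p=q=\mathrm{id}_A$ are trivially smooth and proper, but the fibre of $\Delta$ over $(0,1)$ is empty while the comma category $(0,1)\backslash\Delta$ is the terminal category, so the inclusion of the fibre into the comma is not aspherical and $\Delta$ is not proper. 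The consequence is that the subcategory appearing in the corollary ($p$ and $q$ each smooth and proper) is genuinely larger than the subcategory of fibrant objects of the slice ($(p,q)$ smooth and proper over $A\times B$); you should not identify them. Fortunately your argument never actually uses the false direction: the replacement $C'\To A\times B$ lands in the smaller subcategory, which is contained in the larger one, so the localization comparison goes through for the subcategory of the corollary exactly as you outline. I would just delete the ``precisely when'' and record only the forward implication, which is what you in fact invoke.
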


\begin{paragr}
The simplicial localization $L(\cat)$ of $\cat$ by
the class of weak equivalences can be described using
the structure of category of fibrant objects
given by Proposition \ref{catbrown}.
In particular, the simplicial set $\Hom_{L(\cat)}(A,B)$
can be described as the nerve of the category
$\mathit{Map}(A,B)$, which is defined as the
full subcategory of $\cat/A\times B$ whose
objects are the functors $(p,q):C\To A\times B$ such that $p$
is a trivial fibration (i.e. a functor which is
smooth, proper, and a weak equivalence).
It is easy to see from Proposition \ref{catbrown} that
the fundamental groupoid of $\mathit{Map}(A,B)$
is equivalent to the full subcategory of
$\ho(\cat/A\times B)$ whose objects are the
functors $(p,q):C\To A\times B$ such that $p$
is a weak equivalence. In other words, Proposition \ref{galoisexactder}
can now be reformulated as follows.
\end{paragr}

\begin{cor}[Galois reconstruction theorem]
The groupoid $\pi_1(\mathit{Map}(A,B))$ is canonically
equivalent to the category of cocontinuous morphisms of derivators
from $\Ho(L_B\pref{B})$
to $\Ho(L_A\pref{A})$ which preserve finite homotopy limits.
\end{cor}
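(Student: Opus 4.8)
The plan is to assemble the corollary purely from bookkeeping already done: Proposition~\ref{galoisexactder} identifies the conditions on a span $(p,q):C\To A\times B$ under which the associated morphism $s(p,q)=\derL p_!\derR q^*:\Ho(L_B\pref{B})\To\Ho(L_A\pref{A})$ preserves finite homotopy limits (it is automatically cocontinuous, as noted in the construction leading to~\eqref{galois10}); and the paragraph preceding the corollary identifies $\pi_1(\mathit{Map}(A,B))$ with a full subcategory of $\ho(\cat/A\times B)$. So the task is to feed the latter description through the equivalence~\eqref{galois3}.

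First I would recall the equivalence of categories $S:\ho(\cat/A\times B)\To\sHom_!(\Ho(L_A\pref{A}),\Ho(L_B\pref{B}))$ from~\eqref{galois3}, whose composite with the localization functor is $s$ from~\eqref{galois5}. Next I would invoke the penultimate paragraph: the fundamental groupoid $\pi_1(\mathit{Map}(A,B))$ is equivalent to the full subcategory of $\ho(\cat/A\times B)$ spanned by those $(p,q):C\To A\times B$ for which the nerve of $p$ is a simplicial weak equivalence, i.e.\ for which condition~(g) of Proposition~\ref{galoisexactder} holds. Transporting along $S$, this full subcategory corresponds to the full subcategory of $\sHom_!(\Ho(L_A\pref{A}),\Ho(L_B\pref{B}))$ spanned by the morphisms $s(p,q)$ with $p$ a weak equivalence. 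By the equivalence (c)$\Leftrightarrow$(g) of Proposition~\ref{galoisexactder}, such morphisms are exactly the cocontinuous morphisms of derivators from $\Ho(L_B\pref{B})$ to $\Ho(L_A\pref{A})$ that preserve terminal objects; and by (a)$\Leftrightarrow$(c) (together with the fact, used throughout the proof of~\ref{galoisexactder}, that (a), (c), (d) are all equivalent to (g)), ``preserves terminal objects'' can be upgraded to ``preserves finite homotopy limits'' for any morphism of the form $s(p,q)$. Since $S$ is an equivalence, it restricts to an equivalence between these two full subcategories, which is precisely the asserted equivalence.

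The one point requiring a word of care, and the main obstacle, is the matching of sources and targets: Proposition~\ref{galoisexactder} and its conditions are phrased for the morphism $\derL p_!\derR q^*:\Ho(L_B\pref{B})\To\Ho(L_A\pref{A})$, whereas~\eqref{galois3} is written as $S:\ho(\cat/A\times B)\To\sHom_!(\Ho(L_A\pref{A}),\Ho(L_B\pref{B}))$. This is only a matter of keeping $A$ and $B$ straight: the explicit description~\eqref{galois10} shows $s(p,q)$ lands in $\sHom_!(\Ho(L_B\pref{B}),\Ho(L_A\pref{A}))$, which is consistent with Proposition~\ref{galoisexactder}, and the direction of the variance in the corollary's statement (``from $\Ho(L_B\pref{B})$ to $\Ho(L_A\pref{A})$'') agrees with this. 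I would therefore simply state the correspondence explicitly, noting that under $S$ the subcategory of spans with $p$ a weak equivalence goes to the claimed subcategory of morphisms, and that all the intermediate equivalences of Proposition~\ref{galoisexactder} are used to pass between ``$p$ a weak equivalence'', ``$s(p,q)$ preserves terminal objects'', and ``$s(p,q)$ preserves finite homotopy limits''. No further computation is needed; the corollary is a reformulation.
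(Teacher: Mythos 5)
Your proof is correct and follows essentially the same route as the paper, which presents the corollary as a direct reformulation of Proposition~\ref{galoisexactder} in light of the preceding paragraph's identification of $\pi_1(\mathit{Map}(A,B))$ with the full subcategory of $\ho(\cat/A\times B)$ where $p$ is a weak equivalence, transported along the equivalence $S$. You have also correctly spotted the source/target typo in \eqref{galois3}--\eqref{galois5} (the codomain should read $\sHom_!(\Ho(L_B\pref{B}),\Ho(L_A\pref{A}))$, as \eqref{galois1} and \eqref{galois10} confirm) and correctly interpolated ``preserves finite homotopy limits'' between the equivalent conditions (a) and (c) of Proposition~\ref{galoisexactder}.
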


\begin{paragr}
Let us explain why the preceding corollary can be interpreted
as a Galois reconstruction theorem.
Given a small category $A$, if we think
of $\Ho(L_A\pref{A})$ as the ``topos of representations
of the $\infty$-groupoid associated to $A$'', it is natural
to define the functor of points of $\Ho(L_A\pref{A})$
by
$$B\longmapsto \sHom_!^{\mathit{ex}}(\Ho(L_B\pref{B}),\Ho(L_A\pref{A}))$$
(where $\sHom_!^{\mathit{ex}}(\Ho(L_B\pref{B}),\Ho(L_A\pref{A}))$
denotes the category of cocontinuous morphisms of derivators
which preserve finite homotopy limits).
This is a $2$-functor from $\tau^{\leq 2}L(\cat)$ to the category of groupoids
which is corepresentable precisely by $A$. This can be reformulated
by saying that we can reconstruct the homotopy type of $A$
from the ``topos'' $\Ho(L_A\pref{A})$.
This is the derivator version of To\"en's homotopy Galois
theory~\cite{to1}.
\end{paragr}

\begin{paragr}
Corollary \ref{catbrowncor} can also be used
to understand the compatibilities of
the equivalences of categories of type \eqref{galois3}
with composition of morphisms of derivators.
More precisely, we have a bicategory
$\ho(\mathcal{Dist})$, whose objects
are the small categories, and whose category
of morphisms from $A$ to $B$
is the homotopy category $\ho(\cat/A\times B)$
(composition is defined by homotopy
fiber products). We will finish this section by explaining
how Corollary \ref{catbrowncor} implies that the functors
\eqref{galois3} define a bifunctor from $\ho(\mathcal{Dist})$
to the $2$-category of derivators.
Define a bicategory $\mathit{SP}$
as follows. The obects of $\mathit{SP}$ are the small categories.
Given two small categories $A$ and $B$, the category
of morphisms $\mathit{SP}(A,B)$ is the full subcategory
of $\cat/A\times B$ whose objects are the
functors $(p,q):C\To A\times B$
such that $p$ and $q$ are smooth and proper.
The composition law of $\mathit{SP}$ is defined
by fiber products (which is meaningful,
as the smooth and proper functors are stable
by pullbacks and compositions).

We denote by $\mathcal{Der}_!$ the $2$-category whose
objects are the derivators, and whose morphisms
are the cocontinuous morphims ($2$-cells are just
$2$-cells in the $2$-category of prederivators).
\end{paragr}

\begin{lemma}\label{lemmadistder}
The functors \eqref{galois5} define a bifunctor
$$s:\op{\mathit{SP}}\To\mathcal{Der}_!$$
\end{lemma}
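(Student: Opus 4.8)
The plan is to verify that $s$ respects the bicategorical structure, i.e.\ that it sends identities to identities (up to coherent isomorphism) and is compatible with composition, exploiting the explicit description $s(p,q)=\derL p_!\,\derR q^*$ given in \eqref{galois10}. First I would recall that for an object $A$, the identity $1$-cell of $\op{\mathit{SP}}$ at $A$ is the diagonal $(1_A,1_A):A\To A\times A$; applying $s$ gives $\derL (1_A)_!\,\derR (1_A)^*\simeq\mathrm{id}_{\Ho(L_A\pref{A})}$, which handles the unit. The substance is the compatibility with composition: given $(p,q):C\To A\times B$ and $(p',q'):C'\To B\times D$ in $\mathit{SP}$, their composite in $\mathit{SP}$ is $(p\circ\mathrm{pr}_1,\; q'\circ\mathrm{pr}_2):C\times_B C'\To A\times D$, where the fiber product $C\times_B C'$ is a genuine (strict) pullback of small categories — this is meaningful precisely because $q$ and $p'$ are smooth and proper, so their pullback exists in $\mathit{SP}$ and the smoothness/properness assumptions are stable under it. I must produce a coherent isomorphism
$$s\bigl((p',q')\bigr)\circ s\bigl((p,q)\bigr)\;=\;\derL p'_!\,\derR q'^*\,\derL p_!\,\derR q^*\;\simeq\;\derL(p\,\mathrm{pr}_1)_!\,\derR(q'\,\mathrm{pr}_2)^*\;=\;s\bigl((p',q')\circ(p,q)\bigr)$$
in $\mathcal{Der}_!$.

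The key step is a base-change isomorphism of derivators: in the strict pullback square with corners $C\times_B C'$, $C$, $C'$, $B$ (projections $\mathrm{pr}_1,\mathrm{pr}_2$ down to $C$, $C'$ and legs $q,p'$ to $B$), one has a canonical comparison $\derR q'^*\,\derL p_!\;\simeq\;\derL(\mathrm{pr}_1)_!\,\derR(\mathrm{pr}_2)^*$ between functors on the local model categories $L_{(-)}\pref{-}$. The way I would obtain this is: interpret each $\Ho(L_X\pref{X})$ as $\lcc(\op{X},\Ho(\mathit{sSet}))$ as in \ref{galois6}--\ref{galois9} and in the remark after Theorem \ref{leftrightlcder}; then $\derR q^*$, $\derL p_!$ are the restrictions to these locally-constant subderivators of the ordinary homotopy-Kan-extension operations $q^*$, $\derL p_!$ in the diagram derivators $\Ho(\mathit{sSet})^{\op{(-)}}$, where homotopy colimits are computed pointwise (\cite[Proposition 2.8]{Ci4}) and where Lemma \ref{inclusionlccocontinue} ensures the inclusion of locally constant objects is cocontinuous, so the restrictions agree with the ambient operations on the nose. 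In the ambient diagram derivators, the required base-change isomorphism over a strict pullback square is exactly the smooth/proper base change of \cite[Section 3.2]{Mal} (this is the whole point of imposing smoothness and properness on $q$ and $p'$): by \cite[Corollary 3.2.4, Proposition 3.2.10]{Mal}, smooth and proper functors satisfy the relevant Beck--Chevalley conditions, so $q'^*\derL p_!\simeq \derL(\mathrm{pr}_1)_!\,(\mathrm{pr}_2)^*$ holds. Composing with $\derL p'_!$ on the left and $\derR q^*$ on the right and using functoriality of $(-)_!$ and $(-)^*$ along the composites $p\,\mathrm{pr}_1$ and $q'\,\mathrm{pr}_2$ then yields the desired isomorphism.

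Finally I would check the coherence (pentagon and triangle) axioms for these structure isomorphisms. For the associativity pentagon, one has two iterated fiber products $C\times_B(C'\times_D C'')$ and $(C\times_B C')\times_D C''$, which are canonically strictly isomorphic as small categories over $A\times E$; the two ways of assembling the base-change isomorphisms both reduce, via the description in terms of the ambient diagram derivators, to the canonical Beck--Chevalley coherence for a composite of pullback squares, which is standard (and is part of the formalism recalled in \cite[Section 3.2]{Mal}). The unit triangles are immediate from $(1_A)_!=(1_A)^*=\mathrm{id}$. The main obstacle is the base-change step: one must be careful that the Beck--Chevalley isomorphism is set up at the level of the ambient derivators $\Ho(\mathit{sSet})^{\op{(-)}}$ and then \emph{transported} correctly to the locally constant subderivators — here the content is precisely Lemma \ref{inclusionlccocontinue} (the inclusion $\lcc\To\der^{(-)}$ is cocontinuous, so $\derL p_!$ computed inside $\lcc$ coincides with the ambient one) together with the fact that $q^*$ preserves local weak equivalences (Proposition \ref{functquillenlc}), so that no correction term appears when passing between $\Ho(L_X\pref{X})$ and the diagram derivator. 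Once this identification is in place, everything else is bookkeeping with $2$-functoriality of $\derhom_!(-,\Ho(\mathit{sSet}))$.
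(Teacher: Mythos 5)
Your overall strategy matches the paper's: use a Beck--Chevalley base-change isomorphism for the pullback square of smooth and proper functors, then note that coherence reduces to stability of Beck--Chevalley squares under composition. But the mechanism you give for why the base change descends to the local model structures has a gap.

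The crux is that $\derL p_!$ computed in $\Ho(L_A\pref{A})$ agrees with the \emph{ambient} $\derL p_!$ restricted to locally constant presheaves only when $\derL p_!$ actually preserves locally constant functors — this is exactly what Remark~\ref{serreencore} warns about, and it holds precisely because $p$ is smooth and proper, hence (by \cite[Corollary 6.4.8]{Ci3}) both $p$ and $\op{p}$ are locally constant, so Proposition~\ref{serre} applies to $\derL p_!$ and Proposition~\ref{functquillenlc2} applies to $\derR q_*$. You instead invoke Lemma~\ref{inclusionlccocontinue}, which says the inclusion $\lcc\To\der^{(-)}$ preserves homotopy colimits indexed by a \emph{single} small category; that is the ingredient needed for Proposition~\ref{leftrightlc}, not the statement that left Kan extension along a fixed $p$ maps locally constant objects to locally constant objects. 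Without citing the fact that smooth/proper functors are locally constant and Propositions~\ref{serre}/\ref{functquillenlc2}, the ``restrictions agree on the nose'' claim is unjustified. You also cite \cite[Corollary 3.2.4, Proposition 3.2.10]{Mal} for the base-change isomorphism itself, but those are the stability-of-smooth/proper results; the actual Beck--Chevalley base-change statement the paper uses is \cite[Proposition 3.2.28]{Mal}. (Finally, a minor slip: in your pullback square over $B$ with legs $q$ and $p'$, the base-change isomorphism should read $\derR q^*\,\derL p'_!\simeq\derL(\mathrm{pr}_1)_!\,\derR(\mathrm{pr}_2)^*$, not $\derR q'^*\,\derL p_!$.) Once these citations are corrected, the rest of your coherence bookkeeping is fine and matches the paper.
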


\begin{proof}
Consider a commutative diagram
$$\xymatrix{
&&G\ar[dl]_t\ar[dr]^u&&\\
&E\ar[dl]_p\ar[dr]^q&&F\ar[dl]_r\ar[dr]^s&\\
A&&B&&C}$$
in which the square is a pullback, and all the maps
are smooth and proper. Note that for any smooth
and proper map $\varphi$, both
$\varphi$ and $\op{\varphi}$
are locally constant; see \cite[Corollary 6.4.8]{Ci3}.
By virtue of Propositions \ref{serre} and \ref{functquillenlc2},
we can apply \cite[Proposition 3.2.28]{Mal} to get that the
base change map
$$\derL u_! \derR t^*\To \derR r^*\derL q_!$$
is an isomorphism in $\sHom_!(\Ho(L_E\pref{E}),\Ho(L_F\pref{F}))$.
In particular, we get a canonical isomorphism
$$\derL s_!\derL u_! \derR t^*\derR p^*\simeq \derL s_!\derR r^*\derL q_!\derR p^*\, .$$
These isomorphisms, together with the functors \eqref{galois5}
define a bifunctor: to check the coherences, we are reduce
to check that commutative squares with the Beck-Chevalley
property are stable by compositions, which is
well known to hold.
\end{proof}

\begin{paragr}
We define now a bicategory $\ho(\mathit{SP})$
as follows. The objects are the small categories, and
given two objects $A$ and $B$, the category of morphisms
from $A$ to $B$ is $\ho(\mathit{SP}(A,B))$, that is the
localisation of $\mathit{SP}(A,B)$ by the class of weak
equivalences. We have a localization
bifunctor
\begin{equation}
\gamma:\mathit{SP}\To\ho(\mathit{SP})\, .
\end{equation}
Corollary \ref{catbrowncor} can now be reformulated:
the canonical bifunctor
\begin{equation}\label{jdisteq}
j:\ho(\mathit{SP})\To\ho(\mathcal{Dist})
\end{equation}
is a biequivalence.
\end{paragr}

\begin{prop}
The equivalences of categories \eqref{galois3}
define a bifunctor
$$S:\op{\ho(\mathcal{Dist})}\To\mathcal{Der}_!\, .$$
\end{prop}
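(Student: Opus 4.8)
The plan is to derive the bifunctor $S$ by combining the bifunctor $s:\op{\mathit{SP}}\To\mathcal{Der}_!$ constructed in Lemma \ref{lemmadistder} with the biequivalence $j:\ho(\mathit{SP})\To\ho(\mathcal{Dist})$ of \eqref{jdisteq}. The key point is that $s$ sends the weak equivalences of $\mathit{SP}$ to equivalences in $\mathcal{Der}_!$: indeed, if $(p,q):C\To A\times B$ and $(p',q'):C'\To A\times B$ are objects of $\mathit{SP}(A,B)$ and $f:C\To C'$ is a weak equivalence over $A\times B$, then $\derL f_!$ (resp. $\derR f^*$) is an equivalence of derivators by Proposition \ref{galoisexactder} (conditions (g) $\Rightarrow$ (e), applied to $f$ seen as a functor over the appropriate base), so that $s(p,q)=\derL p_!\derR q^*\simeq\derL p'_!\derL f_!\derR f^*\derR q'^*\simeq\derL p'_!\derR q'^*=s(p',q')$ via the (co)unit of the adjunction $(\derL f_!,\derR f^*)$, which is an isomorphism since $f$ is a weak equivalence. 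Hence $s$ factors, up to canonical isomorphism, through the localization bifunctor $\gamma:\mathit{SP}\To\ho(\mathit{SP})$, giving a bifunctor $\bar s:\op{\ho(\mathit{SP})}\To\mathcal{Der}_!$.

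Next I would transport $\bar s$ along the biequivalence $j$. Since $j:\ho(\mathit{SP})\To\ho(\mathcal{Dist})$ is a biequivalence (this is the reformulation of Corollary \ref{catbrowncor}), it admits a quasi-inverse bifunctor $j^{-1}:\ho(\mathcal{Dist})\To\ho(\mathit{SP})$; composing gives $S=\bar s\circ\op{(j^{-1})}:\op{\ho(\mathcal{Dist})}\To\mathcal{Der}_!$. One then checks that on objects $S(A)=\Ho(L_A\pref{A})$ and that on morphism categories $S$ induces, for each pair $A,B$, precisely the equivalence \eqref{galois3}; this is immediate from the construction of \eqref{galois3} via \eqref{galois2} and Corollary \ref{catbrowncor}, since $\ho(\cat/A\times B)$ is identified with $\ho(\mathit{SP}(A,B))$ through $j$, and the functor $s$ of \eqref{galois5} restricted to $\mathit{SP}(A,B)$ is exactly the functor used in Lemma \ref{lemmadistder}. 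The compatibility with composition — i.e. the coherence $2$-cells of the bifunctor $S$ — are inherited from those of $\bar s$, themselves inherited from $s$, together with the coherence data of the biequivalence $j$.

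The main obstacle I expect is bookkeeping of the coherence isomorphisms: one must verify that the canonical isomorphism $s(p,q)\simeq s(p',q')$ produced by a weak equivalence $f$ is compatible with composition of weak equivalences and with the horizontal composition of $\mathit{SP}$, so that the factorization $\bar s$ is genuinely a bifunctor and not merely a family of functors on morphism-categories. This amounts to checking that the Beck--Chevalley isomorphisms of Lemma \ref{lemmadistder} and the adjunction (co)units for weak equivalences fit together into a commuting system of $2$-cells — the same kind of verification invoked at the end of the proof of Lemma \ref{lemmadistder}, namely that commutative squares with the Beck--Chevalley property are stable under composition. Once this is in place, transporting along the biequivalence $j$ is formal: a quasi-inverse to a biequivalence is determined up to coherent equivalence, and precomposition with a bifunctor preserves bifunctoriality, so $S$ is a well-defined bifunctor, unique up to equivalence, and agrees with \eqref{galois3} on morphisms by construction.
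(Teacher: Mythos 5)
Your proof follows essentially the same route as the paper: factor the bifunctor $s$ of Lemma \ref{lemmadistder} through the localization $\gamma:\mathit{SP}\To\ho(\mathit{SP})$ (the fact that $s$ inverts weak equivalences, which you re-derive via Proposition \ref{galoisexactder}, is already available from the construction of \eqref{galois3} and \eqref{galois5}), and then transport along the biequivalence $j:\ho(\mathit{SP})\To\ho(\mathcal{Dist})$. The one ingredient the paper makes explicit and you leave inside the ``coherence bookkeeping'' caveat is the identification $\ho(\mathit{SP}(A,B)\times\mathit{SP}(B,C))\simeq\ho(\mathit{SP}(A,B))\times\ho(\mathit{SP}(B,C))$, which is what lets the composition data of the bicategory descend; with that noted, the argument matches.
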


\begin{proof}
For any small categories $A$, $B$ and $C$, we have
$$\ho(\mathit{SP}(A,B)\times\mathit{SP}(A,B))=\ho(\mathit{SP}(A,B))\times\ho(\mathit{SP}(B,C))\, .$$
The universal property of localizations and Lemma \ref{lemmadistder}
imply that we get a bifunctor
$$S':\op{\ho(\mathit{SP})}\To\mathcal{Der}_!\, .$$
We deduce from this and from the biequivalence \eqref{jdisteq}
that there is a unique way to define a bifunctor $S$
from $\op{\ho(\mathcal{Dist})}$ to $\mathcal{Der}_!$
from the equivalences of categories \eqref{galois3}
such that $S'j=S$.
\end{proof}

\begin{sch}
Theorem \ref{univ2} asserts that for any
derivator $\der$, we have an equivalence of derivators
$$\derhom_!(\Ho(L_A\pref{A}),\der)\simeq\lcc(A,\der)\, .$$
As $\derhom_!(-,\der)$ is a $2$-functor, we deduce from
the preceding proposition that we get a bifunctor
$$\lcc(-,\der):\ho(\mathcal{Dist})\To\mathcal{Der}_!\, .$$
which sends a small category $A$ to $\lcc(A,\der)$.
This defines a bifunctor
$$\lcc(-,-):\ho(\mathcal{Dist})\times \mathcal{Der}_!\To\mathcal{Der}_!,$$
which defines an enrichment of $\mathcal{Der}_!$ in
homotopy distributors.
\end{sch}

\bibliography{bibdend}
\bibliographystyle{amsalpha}
\end{document}